\newcommand{\cN}{\mathcal{N}}
\newcommand{\dt}{\delta}
\newcommand{\Lb}{\Lambda}
\newcommand{\T}{\mathbb{T}}
\renewcommand{\date}[1]{\begin{center}#1\end{center}}
\newtheorem{Thm}{Theorem}
\newtheorem{Def}{Definition}
\newtheorem{Lm}{Lemma}
\newtheorem{Prop}[Lm]{Proposition}
\newtheorem{Rem}{Remark}
\newtheorem{Cor}[Lm]{Corollary}
\def\bdef{\begin{Def}}
\def\endef{\end{Def}}
\def\bthm{\begin{Thm}}
\def\ethm{\end{Thm}}
\def\bprop{\begin{Prop}}
\def\enprop{\end{Prop}}
\def\blm{\begin{Lm}}
\def\elm{\end{Lm}}
\def\bcor{\begin{Cor}}
\def\ecor{\end{Cor}}
\def\brm{\begin{Rem}}
\def\erm{\end{Rem}}
\def\bfig{\begin{picture}}
\def\efig{\end{picture}}
\def\beq{\begin{eqnarray}}
\def\eneq{\end{eqnarray}}
\def\beal{\begin{aligned}}
\def\enal{\end{aligned}}
\title{On stochastic sea 
of the standard map}
\author{Anton Gorodetski}
\address{Department of Mathematics, University of California, Irvine CA 92697, USA}
\email{asgor@math.uci.edu}
\thanks{This work was supported in part by NSF grant DMS--0901627.}
\def\keywords#1{{\def\@thefnmark{\relax}\@footnotetext{#1}}}
\let\subjclass\keywords
\begin{document}

\begin{abstract}
Consider a generic one-parameter unfolding of a homoclinic tangency of an area preserving surface diffeomorphism. We show that for many parameters (residual subset in an open set approaching the critical value) the corresponding diffeomorphism has  a transitive invariant set $\Omega$ of full Hausdorff dimension. The set $\Omega$ is a topological limit of hyperbolic sets and is accumulated by elliptic islands.

As an application we prove that stochastic sea
 of the standard map has full Hausdorff dimension for sufficiently large topologically generic parameters.
\end{abstract}

\maketitle

\keywords{Keywords: standard map, conservative dynamics, hyperbolic set, Newhouse phenomena, persistent tangencies, Hausdorff dimension, homoclinic picture, homoclinic class, stochastic layer.}

\subjclass{MSC 2000: 37E30, 37D45, 37J45, 37J20. }

\section{Introduction and Main Results}

Here we prove that {\it stochastic sea
 of the Taylor-Chirikov standard map (i.e. the set of orbits with non-zero Lyapunov exponents) has full Hausdorff dimension for large  topologically generic parameters}. In order to do so we show that {\it a perturbation of an area preserving  diffeomorphism with a homoclinic tangency has hyperbolic invariant sets of almost full Hausdorff dimension.}

\subsection{Standard map}\label{s.std}

The simplest and most famous symplectic system with highly non-trivial dynamics is the
Taylor-Chirikov standard map of the two--dimensional torus $\T^2$,
given by
\begin{equation}\label{e.standardmap}
f_k(x,y)=(x+y+k\sin (2\pi x), y+k\sin (2\pi x))\ \text{\rm mod} \
\mathbb{Z}^2.
\end{equation}
This family is related to numerous physical problems, see for example \cite{ch1}, \cite{i}, \cite{ss}. For $k=0$ the map $f_0(x,y)=(x+y, y)$ is completely integrable, and leaves the circles $\{y=\text{\rm const}\}$ invariant. Due to KAM theory, after a perturbation large part of the torus is still foliated by invariant smooth circles, but lots of other phenomena appears: splitting of separatrices \cite{GL}, invariant Cantor sets \cite{MMP}, and many others \cite{BL}, \cite{L}, \cite{Go}. Computer generated pictures show that chaotic part of the phase space (orbits with positive Lyapunov exponents) also form a subset of positive measure, but this was never rigorously justified. Due to Pesin's theory \cite{P} this is equivalent to positivity of the metric entropy.

{\bf Main Question.} ({\it Sinai} \cite{Sin}) {\it Is the metric entropy
of $f_k$ positive for some values of $k$? for positive measure of
values of $k$? for all non-zero values of $k$?}

 A stronger version of this question is a famous conjecture which claims  that the limit density at infinity of the set of parameters $k$ for which the standard map $f_k:\mathbb{T}^2\to \mathbb{T}^2$ is ergodic (and therefore has no elliptic islands) and non-uniformly hyperbolic with respect to Lebesgue measure is equal to one. At the same time, it is known that the set of parameters $k$ with this property (if non-empty) must be nowhere dense in a neighborhood of infinity \cite{Du3}.

In a more general way, one can ask (see \cite{bu}, \cite{X2}) whether an analytic symplectic map of a connected manifold can have coexisting chaotic
component of positive measure and the Kolmogorov-Arnold-Moser (KAM)
tori. There are $C^{\infty}$ examples with this type of {\it mixed behavior} \cite{bu}, \cite{do}, \cite{li}, \cite{pr}, \cite{w}, but the rigorous proof of existence of mixed behavior for a real analytic map is still missing.

 Our main result claims, roughly speaking, that stochastic sea 
 of the standard map has full
Hausdorff dimension for large topologically generic parameters. 

\bthm\label{t.1} There exists $k_0>0$ and a residual set
$\mathcal{R}\in [k_0, +\infty)$ such that for every $k\in
\mathcal{R}$ there exists an infinite sequence of transitive locally
maximal hyperbolic sets of the map $f_k$ \beq\label{e.sequence}
\Lambda_k^{(0)}\subseteq \Lambda_k^{(1)} \subseteq
\Lambda_k^{(2)}\subseteq \ldots \subseteq\Lambda_k^{(n)}\subseteq
\ldots\eneq that has the following properties:

1. The family of sets $\{\Lambda_k^{(0)}\}_{k\ge k_0}$ is  dynamically increasing:   for small
$\varepsilon >0$, $\Lambda^{(0)}_{k+\varepsilon}$ contains the
continuation of $\Lambda_k^{(0)}$ at parameter $k+\varepsilon$;

2. The set $\Lb_k^{(0)}$ is
$\dt_k$-dense in $\T^2$ for $\delta_k=\frac{4}{k^{1/3}}$;

3. Hausdorff dimension $\text{\rm dim}_H\Lambda_k^{(n)} \to 2$ as
$n\to \infty$;

4. $\Omega_k=\overline{\cup_{n\in \mathbb{N}}  \Lambda_k^{(n)}}$
is a transitive invariant set of the map $f_k$, and $\text{\rm dim}_H \Omega_k=2$;

5. for any $x\in \Omega_k, k\in \mathcal{R}$, and any
$\varepsilon>0$ Hausdorff dimension
$$ \text{\rm
dim}_HB_{\varepsilon}(x)\cap \Omega_k=\text{\rm dim}_H \Omega_k=2,
$$
where $B_{\varepsilon}(x)$ is an open ball of radius $\varepsilon$
centered at $x$;

6. Each point of $\Omega_k$ is an accumulation point of elliptic islands of the map $f_k$.
\ethm

The family of hyperbolic sets $\{\Lambda_k^{(0)}\}$ that satisfies properties 1. and 2. was  constructed by Duarte in \cite{Du3}. He also showed that $\text{\rm dim}_H \Lambda_k^{(0)}\to 2$ as $k\to \infty$, and that for topologically generic parameters the set $\Lambda_k^{(0)}$ is accumulated by elliptic islands.

For an open set of parameters our construction provides invariant
hyperbolic sets of Hausdorff dimension arbitrarily close to 2.
\bthm\label{t.2} There exists $k_0>0$ such that for any $\xi>0$
there exists an open and dense subset $U\in [k_0, +\infty)$ such
that for every $k\in U$ the map $f_k$ has an invariant locally maximal hyperbolic
set of Hausdorff dimension greater than $2-\xi$ which is also $\delta_k$-dense in $\mathbb{T}^2$ for $\delta_k=\frac{4}{k^{1/3}}$. \ethm

Notice that these results give a partial explanation of the
difficulties that we encounter studying the standard family. Indeed,
one of the possible approaches is to consider an invariant
hyperbolic set in the stochastic layer and to try to extend the
hyperbolic behavior to a larger part of the phase space through
homoclinic bifurcations. Unavoidably Newhouse domains (see
\cite{N}, \cite{n2}, \cite{N3}, \cite{R} for dissipative case, and \cite{Du1},
\cite{Du2}, \cite{Du4}, \cite{gs2} for the conservative case) associated with
absence of hyperbolicity appear after small change of the parameter.
If the Hausdorff dimension of the initial hyperbolic set is less
than one, then the measure of the set of parameters that correspond
to Newhouse domains is small and has zero density at the critical
value, see \cite{NP}, \cite{PT1}. For the case when the Hausdorff
dimension of the hyperbolic set is slightly bigger than one, similar
result was recently obtained by Palis and Yoccoz \cite{PY}, and the
proof is astonishingly involved. They also conjectured that
analogous property holds for an initial hyperbolic set of any
Hausdorff dimension, but the proof would require even more technical
and complicated considerations.\footnote{Here is a citation from Palis and Yoccoz
\cite{PY}:
{\it ``Of course, we expect the same to be true for all cases $0 <
\text{\rm dim}_H(\Lambda) < 2$. For that, it seems to us that our
methods need to be considerably sharpened: we have to study deeper
the dynamical recurrence of points near tangencies of higher order
(cubic, quartic, ...) between stable and unstable curves. We also
hope that the ideas introduced in the present paper might be useful
in broader contexts. In the horizon lies the famous question whether
for the standard family of area preserving maps one can find sets of
positive Lebesgue probability in parameter space such that the
corresponding maps display non-zero Lyapunov exponents in sets of
positive Lebesgue probability in phase space.''}}
Theorems \ref{t.1} and \ref{t.2} show that in order to understand
the properties of the stochastic sea of the standard map using this approach one has to
face these difficulties.

\subsection{Hyperbolic sets of large Hausdorff dimension}\label{s.hd}

Several famous long standing conjectures (including Main Question above) discuss the measure of certain invariant sets
of some dynamical systems.
Any set of positive Lebesgue  measure  has Hausdorff dimension which is equal to
the dimension of the ambient manifold. Therefore it is reasonable to ask first whether
those invariant sets indeed have full Hausdorff dimension.

In dissipative setting Downarowicz and Newhouse \cite{DN} proved  that there is a residual subset $\mathcal{R}$ of
the space of $C^r$-diffeomorphisms of a compact two dimensional manifold $M$ such that if
$f\in \mathcal{R}$ and $f$ has a homoclinic tangency, then $f$ has compact invariant topologically
transitive sets of Hausdorff dimension two. Their methods use essentially perturbative technics (see \cite{gst}) and therefore cannot be
generalized to the finite parameter families.

In conservative setting  Newhouse
\cite{N4} proved that in $\text{\rm Diff}^{\,1}(M^2, Leb)$ there is a
residual subset of maps such that every homoclinic class\footnote{See Definition \ref{d.hc} below.}
for each of
those maps has Hausdorff dimension 2. Later Arnaud, Bonatti and
 Crovisier \cite{BC}, \cite{ABC} essentially improved that result and showed that in the space of $C^1$ symplectic
  maps the residual subset consists of the transitive  maps
that have only one homoclinic class (the whole manifold).
Notice that due to KAM theory the low smoothness in that work is essential.

Here we show that
{\it a generic one parameter area-preserving homoclinic bifurcation always give birth to a compact invariant topologically transitive set of Hausdorff dimension two.} This set is the closure of the union of
a countable sequence of hyperbolic sets of Hausdorff dimension arbitrary close to two.

\subsubsection{The area preserving Henon family}

First of all we consider area preserving Henon family (\ref{hf}). For $a=-1$ this map has
a degenerate fixed point at $(x,y)=(-1,1)$. We construct invariant hyperbolic sets of large
Hausdorff dimension for $a$ slightly larger than $-1$ near this fixed point. Later we use
the renormalization results to reduce the case of a generic unfolding of an area preserving
surface diffeomorphism with a homoclinic tangency to this construction.

\bthm\label{t.henonfamily} Consider the family of area preserving
Henon maps \beq\label{hf} H_a:
\begin{pmatrix}
x \\
y \\
\end{pmatrix}\mapsto \begin{pmatrix}
y \\
-x+a-y^2 \\
\end{pmatrix}.\eneq
There is a (piecewise continuous) family of sets $\Lambda_a$,
$a\in[-1, -1+\varepsilon]$ for some $\varepsilon>0$,  such that
the following properties hold.

1. The set $\Lambda_a$ is a locally maximal hyperbolic set of the
map $H_a$;

2. The set $\Lambda_a$ contains a saddle fixed point of the map
$H_a$;

3. The set $\Lambda_a$ has an open and closed (in $\Lambda_a$)
subset $\widetilde{\Lambda}_a$ such that the first return map for
$\widetilde{\Lambda}_a$ is a two-component Smale horseshoe; 

4. Hausdorff dimension $dim_H\widetilde{\Lambda}_a\to 2$ as $a\to
-1$.
 \ethm

 Theorem \ref{t.henonfamily} should be considered as an improvement of Lemma A from \cite{Du4}, where  Duarte proves that area preserving Henon maps have hyperbolic sets of large ``left-right thickness" (see \cite{Du4, Mo} for a definition) for values of $a$ slightly larger than $-1$.

The proof essentially uses the construction from  \cite{Du1}, \cite{Du2} that was used by Duarte to study conservative Newhouse phenomena, and   results regarding the splitting of separatrices for Henon family from \cite{G1}, \cite{G2},
\cite{G3}, \cite{GSa}, \cite{BG} (see also  \cite{Ch}, where some numerical results are described). 

 A similar statement holds also for any generic one parameter unfolding of an extremal periodic point  (see
\cite{Du1} for a formal definition) as soon as the form of the splitting of separatrices can be established (see
\cite{G1,GL} for the relevant results on splitting of separatrices).

\subsubsection{Conservative homoclinic bifurcations and hyperbolic sets of large Hausdorff dimension}

  In dissipative case Newhouse \cite{N} showed that near every  surface
diffeomorphism with a homoclinic tangency there are open sets
(nowadays called {\it Newhouse domains}\,) of maps with persistence
homoclinic tangencies. Moreover, in these open sets there are
residual subsets of maps with infinitely many attracting periodic
orbits. Later Robinson \cite{R} showed that this result can be
formulated in terms of generic one parameter unfolding of a
homoclinic tangency.

 In area preserving case  Duarte \cite{Du1}, \cite{Du2}, \cite{Du4} showed that homoclinic tangencies also lead to similar
phenomena, the role of sinks is played by elliptic periodic points. Theorem \ref{t.maintheorem} below is a stronger version of the Duarte's
result: we can control the Hausdorff dimension of the hyperbolic
sets 
that appear in the construction.

In order to construct transitive invariant sets of full Hausdorff dimension we use the notion of a {\it homoclinic class.}

\bdef\label{d.hc} Let $P$ be a hyperbolic saddle of a diffeomorphism $f$. A homoclinic class $H(P,f)$ is a closure of the
union of all the transversal homoclinic points of $P$.
\endef
It is known that $H(P,f)$ is a transitive invariant set of $f$, see
\cite{N1}. Moreover, consider all basic sets (locally maximal
transitive hyperbolic sets) that contain the saddle $P$. A homoclinic
class $H(P,f)$ is a smallest closed invariant set that contains all
of them.

\bthm\label{t.maintheorem} Let $f_0\in \text{\rm Diff}^{\,\infty}(M^2, \text{Leb})$\footnote{We assume $C^{\infty}$-smoothness of diffeomorphisms here just for simplicity.
For the renormalization procedures and arguments used in the current proof it is enough to assume only $C^6$-smoothness (which is probably not optimal either), compare with \cite{Du4}. Since all the cases where we intend to apply this result (standard map, three body problems) are analytic, we are making no attempt to optimize the required class of smoothness.} 
 have an orbit $\mathcal{O}$ of quadratic homoclinic tangencies associated
to some hyperbolic fixed point $P_0$, and $\{f_{\mu}\}$ be a generic
unfolding of $f_0$ in $\text{\rm Diff}^{\,\infty}(M^2, \text{Leb})$.
Then for any
$\delta>0$ there  is an open set $\mathcal{U}\subseteq \Bbb{R}^1$, $0\in
\overline{\mathcal{U}}$, such that the following holds:

$(1)$ for every $\mu\in \mathcal{U}$ the map $f_{\mu}$ has a basic
set $\Delta_{\mu}$ that contains the unique fixed point $P_{\mu}$
near $P_0$,  exhibits persistent homoclinic tangencies, and
Hausdorff dimension
$$\text{\rm dim}_H\Delta_{\mu}>2-\delta;$$

$(2)$ there is a dense subset $\mathcal{D}\subseteq \mathcal{U}$
such that for every $\mu\in \mathcal{D}$ the map $f_\mu$ has a
homoclinic tangency of the fixed point $P_{\mu}$;

$(3)$ there is a residual subset $\mathcal{R}\subseteq \mathcal{U}$
such that for every $\mu\in \mathcal{R}$

$ \qquad (3.1)$ the homoclinic class $H(P_{\mu}, f_{\mu})$
is accumulated by $f_{\mu}$'s generic elliptic
points,

$ \qquad (3.2)$ the homoclinic class $H(P_{\mu}, f_{\mu})$ contains
hyperbolic sets of Hausdorff dimension arbitrary close to 2;
in particular,  $\text{\rm dim}_HH(P_{\mu},f_{\mu})=2$,

$ \qquad (3.3)$ $\text{\rm dim}_H\{x\in H(P_{\mu},f_{\mu})\ |\, P_{\mu}\in \omega(x)\cap \alpha(x)\}=2$.
 \ethm

As usual, when we have a property that holds for a topologically generic parameter values, it is interesting to find out whether it holds for almost every parameter value, or with positive probability (i.e. for a positive measure set of parameters). For dissipative Newhouse phenomena see \cite{ly}, \cite{GKsinks}, \cite{GHK} for some results in this direction. In the context  of Theorem \ref{t.maintheorem} this leads to the following questions.

{\bf Problem 1. }  {\it Under conditions of Theorem \ref{t.maintheorem}, what is the measure of the parameters $\mu$ such that $\text{\rm dim}_HH(P_{\mu}, f_{\mu})=2$? Such that $\text{\rm dim}_HH(P_{\mu}, f_{\mu})>2-\varepsilon$? }

Also, for some applications (see \cite{GK}) it would be useful to improve the item (3.3) of Theorem \ref{t.maintheorem}.

{\bf Problem 2.} {\it Under conditions of Theorem \ref{t.maintheorem}, prove that for every $\mu\in \mathcal{R}$ the set of points with dense orbits in the homoclinic class $H(P_{\mu},f_{\mu})$ has full Hausdorff dimension.
}

Initially our interest in the conservative Newhouse phenomena was
motivated by the fact that it appears in the three body problem.
Namely, let us try to understand the structure of the set of
oscillatory motions (a planet approaches infinity always returning
to a bounded domain) in a Sitnikov problem \cite{A, sit}. It is a special case
of the restricted three body problem where the two primaries with
equal masses are moving in an elliptic orbits of the two body
problem, and the infinitesimal mass is moving on the straight line
orthogonal to the plane of motion of the primaries which passes
through the center of mass. The eccentricity of the orbits of
primaries is a parameter. After some change of coordinates (McGehee
transformation \cite{McG}) the infinity can be considered as a degenerate
saddle with smooth invariant manifolds that correspond to parabolic
motions (the orbit tends to infinity with zero limit velocity).
Stable and unstable manifolds coincide in the case of circular (parameter is equal to zero)
Sitnikov problem. It is known that
  for non-zero eccentricity invariant manifolds
have a point of transverse intersection \cite{GP}, \cite{DH}, \cite{moser}. This leads to the existence
of homoclinic tangencies and appearance of all phenomena that
can be encountered in the conservative
 homoclinic bifurcations. Similar statement holds for the planar circular
 restricted three body problem. The existence of transversal
 homoclinic points in the latter case was established in
 \cite{LS}, \cite{X}. The farther development of this approach is a subject of our current joint project with V.Kaloshin, see \cite{GK} for some preliminary results.

The structure of the paper is the following. In Section \ref{s.leftrightCantor} we remind the definitions of lateral (left- and right-) thickness of a Cantor set and show how Hausdorff dimension of a dynamically defined Cantor set can be estimated via its lateral thicknesses. In Section \ref{s.distest} Duarte's Distortion Theorem that allows to estimate thickness of a non-linear horseshoe is improved to cover a larger class of horseshoes. In Section \ref{s.splitting} we discuss the results by Gelfreich and Sauzin \cite{GSa} on splitting of separatrices in the area preserving Henon family, and then in Section \ref{s.thickhorseshoe} we apply those results together with results of Sections \ref{s.leftrightCantor} and \ref{s.distest} to show how a horseshoe of large Hausdorff dimension appears (i.e. prove Theorem \ref{t.henonfamily}). In Section \ref{s.consproof} we consider conservative homoclinic bifurcations and construct hyperbolic sets of large Hausdorff dimension (and prove Theorem \ref{t.maintheorem}), and, finally, in Section \ref{s.standardproof} we use this result to show that stochastic layer of the standard map has full Hausdorff dimension for many values of the parameter (i.e. prove Theorems \ref{t.1} and \ref{t.2}).

\subsubsection*{Acknowledgments.} {I would like to thank V.~Kaloshin for numerous insights, advices, and support, and to emphasize that originally this paper was motivated by our joint project on Hausdorff dimension of oscillatory motions in three body problems. Also I thank V.~Gelfreich for the patience that he exercised answering my questions regarding his results on splitting of separatrices, and P.~Duarte, T.~Fisher, S.~Newhouse, D.~Saari, and D.~Turaev for useful discussions.}

\section{Left-right thickness and Hausdorff dimension of Cantor sets}\label{s.leftrightCantor}

It is known that a Cantor set of large thickness must have large Hausdorff dimension \cite{PT}. In our construction we will encounter a Cantor set of small thickness. Nevertheless, we are still able to estimate Hausdorff dimension of the constructed Cantor sets. Namely, following the ideas by Moreira \cite{Mo} and Duarte \cite{Du2}, we use lateral (left- and right-) thickness of a Cantor set, and we will see that the Cantor sets in our construction have one of the lateral thicknesses large and another one small (but controlled). In this section we show how to estimate Hausdorff dimension of a Cantor set in this case.

\subsection{Dynamically defined Cantor sets}

Here we reproduce the definition of the left and right thickness from \cite{Du2} and \cite{Mo}. We will use
these one-sided thicknesses instead of the standard definition of thickness. See \cite{PT} for the usual definition of the
thickness of a Cantor set.

Name {\it dynamically defined Cantor set} any pair $(K,\psi)$ such that $K\subseteq \Bbb{R}$ is a Cantor set and
$\psi:K\to K$ is a locally Lipschitz expanding map, topologically conjugated to some subshift of a finite type
of a Bernoulli shift $\sigma:\{0,1,\ldots,p\}^{\Bbb{N}}\to \{0,1,\ldots,p\}^{\Bbb{N}}$. For the sake of
simplicity, and because this is enough for our purpose, we will restrict ourselves to the case where $\psi$ is
conjugated to the full Bernoulli shift $\sigma: \{0,1\}^{\Bbb{N}}\to \{0,1\}^{\Bbb{N}}$. Also we will assume
that a {\it Markov partition} $\mathcal{P}=\{K_0,K_1\}$  of $(K,\psi)$ is given.  In our case this means that
the following properties are satisfied:

(1) $\mathcal{P}$ is a partition  of $K$ into disjoint union of two  Cantor subsets, $K=K_0\cup K_1$, $K_0\cap
K_1=\emptyset$;

(2) the restriction of $\psi$ to each $K_i$, $\psi|_{K_i}:K_i\to K$, is a strictly monotonous Lipschitz
expanding homeomorphism.

For a general definition of Markov partition see \cite{Mo}, \cite{PT}.

 Given a symbolic sequence $(a_0,\ldots, a_{n-1})\in \{0,1\}^n,$ denote
$$
K(a_0,\ldots, a_{n-1})=\cap_{i=0}^{n-1}\psi^{-i}(K_{a_{i}}),
$$
then the map $\psi^n:K(a_0,\ldots, a_{n-1})\to K$ is a Lipschitz expanding homeomorphism.

A bounded component of the complement $\Bbb{R}\backslash K$ is
called a {\it gap} of $K$. For a dynamically defined Cantor set
$(K,\psi)$ the gaps are ordered in the following way. Denote by
$\widehat{A}$ the convex hall of a subset $A\subseteq \Bbb{R}$. Then
the interval $\widehat{K}\backslash (\widehat{K_0}\cup
\widehat{K_1})$ is called a gap of order zero. A connected component
of
$$
\widehat{K}\backslash \cup_{(a_0,\ldots, a_{n-1})\in \{0,1\}^n}\widehat{K(a_0,\ldots, a_{n-1})}
$$
that is not a gap of order less than or equal to $n-1$ is called a gap of order $n$. It is straightforward to
check that every gap of $K$ is a gap of some finite order, and also that, given a gap $U=(x,y)$ of order $n$,
for every $0\le k\le n$ the open interval bounded by $\psi^k(x)$ and $\psi^k(y)$ is a gap of order $n-k$. \bdef
Given a gap $U$ of $K$ with order $n$, we denote by $L_U$, respectively $R_U$, the unique interval of the form
$\widehat{K(a_0,\ldots, a_{n-1})}$, with $(a_0,\ldots, a_{n-1})\in \{0,1\}^n$, that is left, respectively right,
adjacent to $U$. The greatest lower bounds
$$
\tau_L(K)=\inf\left\{\frac{|L_U|}{|U|}: U \text{ is a gap of }
 K\right\}
$$
$$
\tau_R(K)=\inf\left\{\frac{|R_U|}{|U|}: U \text{ is a gap of }
 K\right\}
$$
are respectively called the {\it left} and {\it right thickness} of
$K$. Similarly, the ratios
$$
\tau_L(\mathcal{P})=\frac{|L_{U_0}|}{|U_0|}\ \ \   \text{and}\ \ \
\tau_R(\mathcal{P})=\frac{|R_{U_0}|}{|U_0|},
$$
where $U_0$ is the unique gap of order zero, are called the left and
the right thickness of the Markov partition $\mathcal{P}$.
\endef

\begin{figure}
  \includegraphics{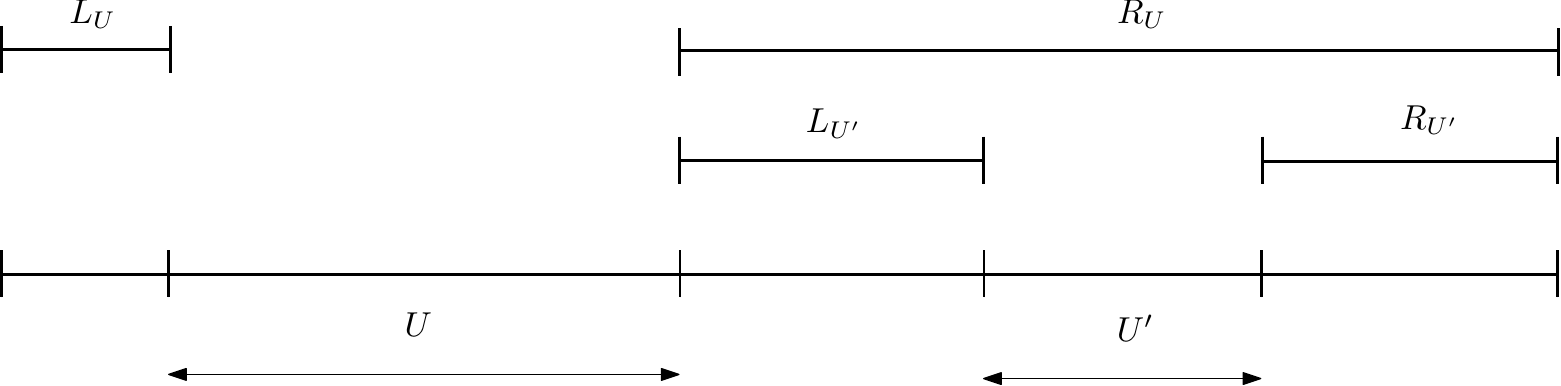}\\
  \caption{On definition of thickness}\label{fig1}
\end{figure}

Initially left- and right- thickness were introduced by Moreira\footnote{Original Moreira's definition is formally different, and can be used for any Cantor set, not necessarily for dynamically defined. We use the modification suggested by Duarte in \cite{Du2}. Lemma \ref{l.leftright} holds in either case, and Duarte's definition is more convenient for dynamically defined Cantor sets.} who proved the following generalization of the Newhouse's Gap Lemma \cite{N}.

\blm\label{l.leftright} {\rm (Left-right gap lemma, see \cite{Mo})} Let $(K^s, \psi^s)$, $(K^u, \psi^u)$ be
dynamically defined Cantor sets such that the intervals supporting $K^s$ and $K^u$ do intersect, $K^s$ (resp.
$K^u$) is not contained inside a gap of $K^u$ (resp. $K^s$).  If $\tau_L(K^s)\tau_R(K^u)>1$ and
$\tau_R(K^s)\tau_L(K^u)>1$, then both Cantor sets intersect, $K^s\cap K^u\ne \emptyset$.  \elm

\subsection{Large thickness implies large Hausdorff dimension}

Let us recall the definition of the Hausdorff dimension. Let
$K\subset \Bbb{R}$ be a Cantor set and $\mathcal{U}=\{U_i\}_{i\in
I}$ a finite covering of $K$ by open intervals in $\Bbb{R}$. We
define the diameter $\text{diam}(\mathcal{U})$ of $\mathcal{U}$ as
the maximum of $|U_i|$, $i\in I$, where $|U_i|$ denotes the length
of $U_i$. Define $H_{\alpha}(\mathcal{U})=\sum_{i\in
I}|U_i|^{\alpha}$. Then the {\it Hausdorff $\alpha$-measure} of $K$
is
$$
m_{\alpha}(K)=\lim_{\varepsilon\to 0}\left(\inf_{\mathcal{U} \text{
covers } K,\ \text{diam}(\mathcal{U})<\varepsilon}
H_{\alpha}(\mathcal{U})\right).
$$
It is not hard to see that there is a unique number, the {\it
Hausdorff dimension} of $K$, denoted by $\text{dim}_H(K)$, such that
for $\alpha<\text{dim}_H(K)$, $m_{\alpha}(K)=\infty$ and for
$\alpha>\text{dim}_H(K)$, $m_{\alpha}(K)=0$.

\bprop\label{dfromtau} Consider a Cantor set $K$, denote
$\tau_L=\tau_L(K)$ and  $\tau_R=\tau_R(K)$, and let $d$ be the
solution of the equation \beq\label{e.tau} \tau_L^d+\tau_R^d=(1+\tau_L+\tau_R)^d.
\eneq
Then $\dim_H(K)\ge d$.\enprop \brm One can consider Proposition
\ref{dfromtau} as a generalization of the Proposition 5 from Chapter
4.2 in \cite{PT}, where the relation between the usual thickness and
the Hausdorff dimension of a Cantor set was established. Indeed, if $\tau_L=\tau_R=\tau$ then Proposition \ref{dfromtau} implies that $\text{\rm dim}_H(K)\ge \frac{\log 2}{\log\left(2+\frac{1}{\tau}\right)}$, which is exactly the statement from \cite{PT}.  \erm
\begin{proof}[Proof of the Proposition \ref{dfromtau}.] We will need the following elementary Lemma.
 \blm\label{elementarylemma}
If $d\in(0,1)$ is a solution of the equation (\ref{e.tau}) then
\begin{align*}
    \min\left\{x^d+y^d\ |\ x\ge 0, y\ge 0, x+y\le 1, x\ge \tau_L(1-x-y), y\ge \tau_R(1-x-y)
\right\}=1.
\end{align*}
 \elm
\begin{proof}[Proof of Lemma \ref{elementarylemma}.]   The function $f(x,y)=x^d+y^d$ is concave, and takes value 1 at points $(0,1), (1, 0)$, and $\left(\frac{\tau_L}{1+\tau_L+\tau_R},
\frac{\tau_R}{1+\tau_L+\tau_R}\right)$. Therefore its minimum in the triangle with vertices at these points is equal to 1.
\end{proof}

 We show that $H_d(\mathcal{U})\ge
(\text{diam } K)^d$ for every finite open covering $\mathcal{U}$ of
$K$, which clearly implies the proposition. We can assume that
$\mathcal{U}$ is a covering with disjoint intervals. This is no
restriction because whenever two elements of $\mathcal{U}$ have
nonempty intersection we can replace them by their union, getting in
this way a new covering $\mathcal{V}$ such that $H_d(\mathcal{V})\le
H_d(\mathcal{U})$. Note that, since $\mathcal{U}$ is an open
covering of $K$, it covers all but finite number of gaps of $K$. Let
$U$, a gap of $K$, have minimal order among the gaps of $K$ which
are not covered by $\mathcal{U}$. Let $C^L$ and $C^R$ be that
bridges of $K$ at the boundary points of $U$.

  By construction there are $A^L$, $A^R\in \mathcal{U}$ such that $C^L\subset A^L$ and $C^R\subset A^R$. Take
  the convex hall $A$ of $A^L\cup A^R$. Then
  $$
|A^L|\ge |C^L|\ge \tau_L\cdot |U|\ge \tau_L(|A|-|A^L|-|A^R|)
  $$
and
$$
|A^R|\ge |C^R|\ge \tau_R\cdot |U|\ge \tau_R(|A|-|A^L|-|A^R|).
$$
Or, equivalently,
$$
\frac{|A^L|}{|A|}\ge
\tau_L\left(1-\frac{|A^L|}{|A|}-\frac{|A^R|}{|A|}\right)
$$
and
$$
\frac{|A^R|}{|A|}\ge
\tau_R\left(1-\frac{|A^L|}{|A|}-\frac{|A^R|}{|A|}\right).
$$
Lemma \ref{elementarylemma} now implies that
$$
\left(\frac{|A^L|}{|A|}\right)^d+\left(\frac{|A^R|}{|A|}\right)^d\ge
1,
$$
and $|A^L|^d+|A^R|^d\ge |A|^d$. This means that the covering
$\mathcal{U}_{\,1}$ of $K$ obtained by replacing $A^L$ and $A^R$ by
$A$ in $\mathcal{U}$ is such that $H_d(\mathcal{U}_{\,1})\le
H_d(\mathcal{U})$. Repeating the argument we eventually construct
$\mathcal{U}_{\,k}$, a covering of the convex hall of $K$ with
$H_d(\mathcal{U}_{\,k})\le H_d(\mathcal{U})$. Since we must have
$H_d(\mathcal{U}_{\,k})\ge (\text{diam }K)^d$, this finishes the proof.\end{proof}

Proposition \ref{dfromtau} can be used to find an explicit estimate
of the Hausdorff dimension via one-sided thicknesses. In particular,
when one of the one-sided thicknesses is very large and another one
is small, the following Proposition gives an estimate that is good
enough for our purposes.

\bprop\label{logestimate} Denote by $\tau_L$ and $\tau_R$ the left
and right thicknesses of the Cantor set $K\subset \Bbb{R}$. Then
$$
\text{dim}_HK> \max \left(\
\frac{\log\left(1+\frac{\tau_R}{1+\tau_L}\right)}{\log\left(1+\frac{1+\tau_R}{\tau_L}\right)},\
\ \ \
\frac{\log\left(1+\frac{\tau_L}{1+\tau_R}\right)}{\log\left(1+\frac{1+\tau_L}{\tau_R}\right)
} \ \right).
$$
\enprop

 \begin{proof}[Proof of Proposition \ref{logestimate}.] We will use the following Lemma.
 \blm\label{inequality}
Assume that for some $x,y>0$, $x+y<1$, and some $d_1, d_2\in (0,1)$
the following relations hold:
$$
y=(1-x)^{\frac{1}{d_1}}, \ \ \ \ \ x^{d_2}+y^{d_2}=1.
$$
Then $d_2>d_1$.
 \elm
\begin{proof}[Proof of Lemma \ref{inequality}.]  Indeed, $(1-x)^{\frac{1}{d_1}}=y=(1-x^{d_2})^{\frac{1}{d_2}}<(1-x)^{\frac{1}{d_2}}$, so due
 to our choice of $x$ we have  $d_2>d_1$. \end{proof}

 Let us apply Lemma \ref{inequality} to $x=\frac{\tau_L}{1+\tau_L+\tau_R}$ and
 $y=\frac{\tau_R}{1+\tau_L+\tau_R}$. If $y=(1-x)^{\frac{1}{d_1}}, x^{d_2}+y^{d_2}=1$ for some $d_1, d_2\in (0,1)$
 then by Proposition \ref{dfromtau} we have
 $$\text{dim}_HK\ge d_2>d_1=\frac{\log (1-x)}{\log y}=\frac{\log\left(1-\frac{\tau_L}{1+\tau_L+\tau_R}\right)}
 {\log\left(\frac{\tau_R}{1+\tau_L+\tau_R}\right)}=\frac{\log\left(1+\frac{\tau_L}{1+\tau_R}\right)}
 {\log\left(1+\frac{1+\tau_L}{\tau_R}\right) }.$$
In a similar way one can show that
$\text{dim}_HK>\frac{\log\left(1+\frac{\tau_R}{1+\tau_L}\right)}{\log\left(1+\frac{1+\tau_R}{\tau_L}\right)}$.
\end{proof}

\brm\label{remark5} Assume that $\tau_R\sim \frac{1}{\lambda-1}$,
$\tau_L\sim (\lambda-1)^{\nu}$. Then $$ \lim_{\lambda\to
1+0}\frac{\log\left(1+\frac{\tau_R}{1+\tau_L}\right)}{\log\left(1+\frac{1+\tau_R}{\tau_L}\right)}=\lim_{\lambda\to
1+0}\frac{\log\left(1+\frac{\frac{1}{\lambda-1}}{1+(\lambda-1)^{\nu}}\right)}{\log\left(1+\frac{1+\frac{1}
{\lambda-1}}{(\lambda-1)^{\nu}}\right)}=\frac{1}{1+\nu}.
$$
So if $\nu$ is small enough and $\lambda$ is close to one, then
$\text{{\rm dim}}_HK$ is close to 1.
\erm

\section{Nonlinear horseshoes and distortion estimates}\label{s.distest}

Here we describe the way to estimate lateral thicknesses of a non-linear horseshoe. We follow the approach from \cite{Du2}, but with some modifications; since our goal is to construct horseshoes with large (close to 2) Hausdorff dimension, we have to deal with a larger class of horseshoes than the one considered in \cite{Du2}.

\subsection{Non-linear horseshoes and their Markov partitions.}

\bdef\label{classf} Define $\mathcal{F}$ to be the set of all maps $f:\mathbf{S}_0\cup\mathbf{S}_1\to \Bbb{R}^2$
such that:

(1) $\mathbf{S}_0$, $\mathbf{S}_1\subset \mathbb{R}^2$ are compact sets, diffeomorphic to rectangles, with non-empty interior;

(2) $f$ is a map of class $C^2$, in a neighborhood of $\mathbf{S}_0\cup\mathbf{S}_1$, mapping this compact set
diffeomorphically onto its image $f(\mathbf{S}_0)\cup f(\mathbf{S}_1)$;

(3) the maximal invariant set $\Lambda(f)=\cap_{n\in \Bbb{Z}}f^{-n}(\mathbf{S}_0\cup\mathbf{S}_1)$ is a
hyperbolic basic set conjugated to the topological Bernoulli shift $\sigma:\{0,1\}^{\Bbb{Z}}\to \{0,1\}^{\Bbb{Z}}$;

(4) $\mathcal{P}=\{\mathbf{S}_0, \mathbf{S}_1\}$ is a Markov partition for $f:\Lambda(f)\to \Lambda(f)$, in
particular, $f$ has two fixed points, $\mathbf{P}_0\in \mathbf{S}_0$ and $\mathbf{P}_1\in \mathbf{S}_1$, whose
stable and unstable manifolds contain the boundaries of $\mathbf{S}_0$ and $\mathbf{S}_1$;

(5) both fixed points $\mathbf{P}_0$ and $\mathbf{P}_1$ have positive eigenvalues.
\endef

\begin{figure}
  \includegraphics[width=0.7\textwidth]{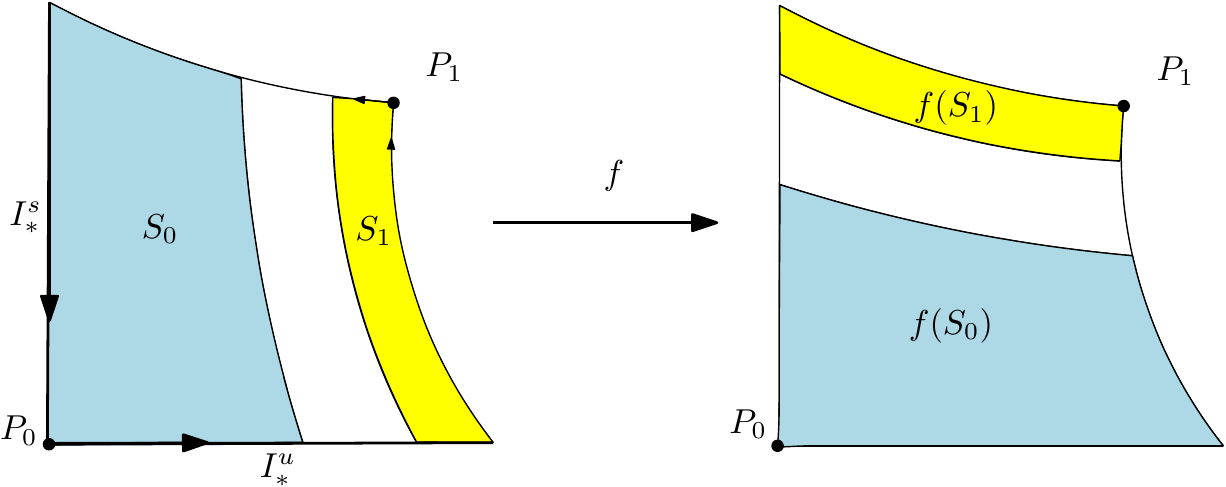}\\
  \caption{A nonlinear horseshoe $f\in \mathfrak{F}$}\label{fig3}
\end{figure}

The action of $f$ and $f^{-1}$ respectively on the stable, and unstable, foliation of $\Lambda$,
$$
 \mathcal{F}^s=\{ \text{\rm connected comp. of } W^s(\Lambda)\cap (\mathbf{S}_0\cup\mathbf{S}_1)\},
$$
$$
 \mathcal{F}^u=\{ \text{\rm connected comp. of } W^u(\Lambda)\cap (f(\mathbf{S}_0)\cup f(\mathbf{S}_1))\},
$$
can be described in the following way. Define
$$
I_*^s=W^s_{loc}(\mathbf{P}_0)\cap \mathbf{S}_0\ \ \ \  \ \text{\rm and} \ \ \ \
I_*^u=W^u_{loc}(\mathbf{P}_0)\cap f(\mathbf{S}_0).
$$
$I_*^s$ and $I_*^u$ are stable and unstable leaves of $\Lambda$ respectively transversal to the foliation
$\mathcal{F}^u$ and $\mathcal{F}^s$. Then the Cantor sets
$$
K^s=\Lambda\cap I_*^u\ \ \ \ \ \  \text{\rm and} \ \ \ \ \  K^u=\Lambda\cap I_*^s,
$$
can be identified with the set of stable leaves of $\mathcal{F}^s$, respectively unstable leaves of
$\mathcal{F}^u$. Define the projections $\pi_s:\Lambda\to K^s$ and $\pi_u:\Lambda\to K^u$ in the obvious way:
$\pi_s(P)$ is the unique point in $W^s_{loc}(P)\cap I_*^u$, and similarly $\pi_u(P)$ is the unique point in
$W^u_{loc}(P)\cap I_*^s$. The maps $\psi^s:K^s\to K^s$ and $\psi^u:K^u\to K^u$,
$$
\psi^s=\pi_s\circ f \ \ \ \ \  \text{\rm and} \ \ \ \ \ \psi^u=\pi_u\circ f^{-1},
$$
describe the action of $f$, respectively $f^{-1}$, on stable, respectively unstable leaves of $\Lambda$. The
pairs $(K^s, \psi^s)$ and $(K^u, \psi^u)$ are dynamically defined Cantor sets, topologically conjugated to the
Bernoulli shift $\sigma:\{0,1\}^{\Bbb{N}}\to \{0,1\}^{\Bbb{N}}$, with Markov partitions
$\mathcal{P}^u=\{I^u_*\cap \mathbf{S}_0, I^u_*\cap \mathbf{S}_1\}$ and $\mathcal{P}^s=\{I^s_*\cap
f(\mathbf{S}_0), I^s_*\cap f(\mathbf{S}_1)\}$.

\subsection{Distortion of a dynamically defined Cantor set.}

\bdef Given a Lipschitz expanding map $g:J\to \Bbb{R}$, defined on some subset $J\subset \Bbb{R}$, we define
distortion of $g$ on $J$ in the following way:
$$
\text{\rm Dist}(g, J)=\sup_{x,y,z\in J} \log
\left\{\frac{|g(y)-g(x)|}{|g(z)-g(x)|}\frac{|z-x|}{|y-x|}\right\}\in [0,+\infty],
$$
where the sup is taken over all $x,y,z\in J$ such that $z\ne x$ and $y\ne x$; due to injectivity of $g$ this
implies that $g(z)\ne g(x)$ and $g(y)\ne g(x)$.
\endef
Reversing the roles of $y$ and $z$ we see that the distortion is always greater than or equal to $\log 1=0$. If
$\text{Dist}(g, J)=c$, then for all $x,y,z\in J$ with $z\ne x$ and $y\ne x$ we have
$$
e^{-c}\frac{|y-x|}{|z-x|}\le \frac{|g(y)-g(x)|}{|g(z)-g(x)|}\le e^c \frac{|y-x|}{|z-x|}.
$$
\bdef The distortion of a dynamically defined Cantor set $(K, \psi)$ is defined as
$$
\text{\rm Dist}_{\psi}(K)=\sup \text{\rm Dist}(\psi^n, K(a_0, \ldots, a_{n-1}))
$$
taken over all sequences $(a_0, \ldots, a_{n-1})\in \{0,1\}^n$.
\endef
\blm[see \cite{PT}, \cite{Du2}]\label{dist} Let $(K, \psi)$ be a dynamically defined Cantor set with a
Markov partition $\mathcal{P}$ and distortion $\text{\rm Dist}_{\psi}(K)=c$. Then
$$
e^{-c}\tau_L(\mathcal{P})\le \tau_L(K)\le e^c\tau_L(\mathcal{P}), \ \ \ \ e^{-c}\tau_R(\mathcal{P})\le
\tau_R(K)\le e^c\tau_R(\mathcal{P}).
$$
\elm

\subsection{Duarte's Distortion Theorem }

For $C^*=2$ the following Definition coincides with Definition 4 from \cite{Du2}.

\begin{Def}\label{classofmaps}
Given positive constants $C^*$ along with small $\varepsilon$ and $\gamma$, define $\mathcal{F}(C^*,
\varepsilon, \gamma)$ to be the class of all maps $f:\mathbf{S}_0\cup\mathbf{S}_1\to \Bbb{R}^2, \ f\in \mathcal{F},$ such that:

(1)\ \ \ $\text{\rm diam\,}(\mathbf{S}_0\cup\mathbf{S}_1)\le 1, \text{\rm diam\,}(f(\mathbf{S}_0)\cup
f(\mathbf{S}_1))\le 1$;

(2)\ \ \ the derivative of $f$, $Df_{(x,y)}=\begin{pmatrix}
                                              a & b \\
                                                c & d
                                            \end{pmatrix}
                                             $,  where $a, b, c$ and $d$ are $C^1$-functions, satisfies all over
$\mathbf{S}_0\cup\mathbf{S}_1$

$\qquad$ (a) $\text{\rm det } Df=ad-bc=1$,

$\qquad$ (b) $|d|<1<|a|\le C^*/\varepsilon,$

$\qquad$ (c) $|b|, |c|\le \varepsilon (|a|-1);$

(3) the $C^1$-functions on $f(\mathbf{S}_0)\cup f(\mathbf{S}_1)$, $\tilde{a}=a\circ
f^{-1}$, $\tilde{b}=b\circ f^{-1}$, $\tilde{c}=c\circ f^{-1}$ and
$\tilde{d}=d\circ f^{-1}$, i.e. $Df^{-1}_{(x,y)}=\begin{pmatrix}
                                              \tilde{d} & -\tilde{b} \\
                                               -\tilde{c}  & \tilde{a}
                                            \end{pmatrix}
                                             $, satisfy

$\qquad$ (a) $\left| \frac{\partial \tilde{b}}{\partial
x}\right|=\left| \frac{\partial \tilde{d}}{\partial y}\right|,
\left| \frac{\partial \tilde{b}}{\partial y}\right|, \left|
\frac{\partial \tilde{c}}{\partial x}\right|, \left| \frac{\partial
\tilde{a}}{\partial x}\right|=\left| \frac{\partial
\tilde{c}}{\partial y}\right|\le \gamma (|\tilde{a}|-1),$

$\qquad$ (b) $\left| \frac{\partial {a}}{\partial y}\right|=\left|
\frac{\partial b}{\partial x}\right|, \left| \frac{\partial
{b}}{\partial y}\right|, \left| \frac{\partial{c}}{\partial
x}\right|, \left| \frac{\partial {c}}{\partial y}\right|=\left|
\frac{\partial d}{\partial x}\right|\le \gamma (|{a}|-1),$

$\qquad$ (c) $\left| \frac{\partial \tilde{a}}{\partial y}\right|,
\left| \frac{\partial \tilde{d}}{\partial x}\right|\le \gamma
|\tilde{a}|(|\tilde{a}|-1),$

$\qquad$ (d)  $\left| \frac{\partial {a}}{\partial x}\right|, \left|
\frac{\partial {d}}{\partial y}\right|\le \gamma
|{a}|(|{a}|-1);$

(4) the variation of $\log |a(x,y)|$ in each rectangle $S_i$ is less
or equal to $\gamma (1-\alpha_i^{-1})$, where
$\alpha_i=\max_{(x,y)\in S_i}|a(x,y)|$;

(5) finally, the gap sizes satisfy:
$$
\text{\rm dist}\,(\mathbf{S}_0\cup\mathbf{S}_1)\ge \frac{\varepsilon}{\gamma} \ \ \ \ \
\text{and}\ \ \ \ \ \text{\rm dist}\,(f(\mathbf{S}_0),f(\mathbf{S}_1))\ge
\frac{\varepsilon}{\gamma}.
$$
\end{Def}

 The nice feature of the maps from
$\mathcal{F}(C^*, \varepsilon, \gamma)$ is that the stable and
unstable foliations have small uniformly bounded distortion, as the
following theorem shows.
\begin{Thm}\label{distortiontheorem}
For a given $C^*>0$ and all small enough $\varepsilon>0$ and
$\gamma>0$, given $f\in \mathcal{F}(C^*, \varepsilon, \gamma),$ the
basic set $\Lambda(f)$ gives dynamically defined Cantor sets
$(K^u,\psi^u)$ and $(K^s,\psi^s)$ with distortion, bounded by
$D(C^*, \varepsilon, \gamma)=4(C^*+3)\gamma+2\varepsilon$. In
particular,
$$
e^{-D(C^*, \varepsilon, \gamma)}\tau_{L}(\mathcal{P}^s)\le
\tau_{L}(K^s(f))\le e^{D(C^*, \varepsilon,
\gamma)}\tau_{L}(\mathcal{P}^s),
$$
$$
e^{-D(C^*, \varepsilon, \gamma)}\tau_{R}(\mathcal{P}^s)\le
\tau_{R}(K^s(f))\le e^{D(C^*, \varepsilon,
\gamma)}\tau_{R}(\mathcal{P}^s),
$$
$$
e^{-D(C^*, \varepsilon, \gamma)}\tau_{L}(\mathcal{P}^u)\le
\tau_{L}(K^u(f))\le e^{D(C^*, \varepsilon,
\gamma)}\tau_{L}(\mathcal{P}^u),
$$
$$
e^{-D(C^*, \varepsilon, \gamma)}\tau_{R}(\mathcal{P}^u)\le
\tau_{R}(K^u(f))\le e^{D(C^*, \varepsilon,
\gamma)}\tau_{R}(\mathcal{P}^u).
$$
\end{Thm}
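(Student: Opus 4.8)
The plan is to control the distortion $\text{\rm Dist}_{\psi^s}(K^s)$ and $\text{\rm Dist}_{\psi^u}(K^u)$ directly, by tracking how the slopes of the stable and unstable foliations, and the expansion rates along them, vary as we iterate. Once the distortion bound $D(C^*,\varepsilon,\gamma)=4(C^*+3)\gamma+2\varepsilon$ is established, the four chains of inequalities relating $\tau_{L,R}(K^{s,u}(f))$ to $\tau_{L,R}(\mathcal{P}^{s,u})$ follow immediately from Lemma \ref{dist}. So the whole content is the distortion estimate.

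First I would set up invariant cone fields. Conditions (2b), (2c) of Definition \ref{classofmaps} say that $Df$ is dominated: $|a|>1>|d|$ and the off-diagonal terms $|b|,|c|$ are of size $\varepsilon(|a|-1)$, uniformly small relative to the expansion. A standard graph-transform computation shows there is an invariant unstable cone of slope $O(\varepsilon)$ and, dually (using the formula for $Df^{-1}$ in condition (3)), an invariant stable cone; the unstable foliation $\mathcal{F}^u$ and the stable foliation $\mathcal{F}^s$ are the integral curves. The key quantitative point: along an unstable leaf the map $\psi^u$ contracts (in backward time $\psi^u=\pi_u\circ f^{-1}$) at a rate comparable to $|\tilde a|^{-1}$, up to a multiplicative factor $1+O(\varepsilon)$ coming from the small slopes. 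Then I would estimate the logarithmic derivative of this contraction rate as a point moves within one cylinder $K(a_0,\dots,a_{n-1})$. This is where conditions (3a)--(3d), (4) enter: they bound the partial derivatives of $a,b,c,d$ and of $\tilde a,\tilde b,\tilde c,\tilde d$ by $\gamma$ times appropriate powers of $(|a|-1)$ and $|a|$, precisely so that when one differentiates $\log(\text{contraction at step }k)$ and sums a geometric series in the expansion rate, each term contributes $O(\gamma)$ and the sum telescopes to a finite constant independent of $n$. The variation bound (4) on $\log|a|$ inside each rectangle handles the non-uniformity of the expansion rate between the two Markov pieces. Summing the bounded-distortion contributions over all $n$ scales and both foliations, and bookkeeping the numerical constants (the $C^*$-dependence comes in through condition (2b), $|a|\le C^*/\varepsilon$, which enters estimates of type (3c)--(3d)), produces the explicit bound $4(C^*+3)\gamma+2\varepsilon$: the $2\varepsilon$ is the cone-slope contribution and the $4(C^*+3)\gamma$ is the accumulated derivative variation. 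Condition (1) (the images and domains have diameter $\le 1$) and condition (5) (the gaps are not too small, $\ge\varepsilon/\gamma$) are used to make the geometric-series estimates uniform and to ensure the Markov-partition ratios $\tau_{L,R}(\mathcal{P})$ are themselves finite and positive.

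The bulk of the argument is essentially that of \cite{Du2}, so I would present it as a modification of Duarte's proof: identify the finitely many places where the constant $2$ in his Definition 4 was used only as an upper bound for $|a|\varepsilon$, replace it by the general $C^*$, and check that every estimate remains valid with the stated $C^*$-dependence. The main obstacle is exactly this: verifying that enlarging the allowed expansion (replacing $|a|\le 2/\varepsilon$ by $|a|\le C^*/\varepsilon$) does not break any of Duarte's estimates --- in particular those of type (3c), (3d), (4), where the expansion rate appears with a positive power and could a priori blow up. The point is that conditions (3c), (3d) were deliberately stated with an extra factor $|\tilde a|$, resp. $|a|$, compared to (3a), (3b), so that the ratio (derivative bound)$/$(expansion rate) still carries one factor $(|a|-1)$, keeping the per-step contribution $O(\gamma)$ uniformly in $C^*$; and the $C^*$ survives only linearly, in the final additive constant. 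Once that dependence is tracked honestly through the geometric sums, the theorem follows.
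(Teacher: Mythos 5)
Your proposal takes essentially the same route as the paper: the paper's proof is precisely the adaptation of Duarte's Theorem 2 from \cite{Du2}, identifying the single inequality (in his Lemma 4.2) where $|a|\le 2/\varepsilon$ is used, replacing it by $|a|\le C^*/\varepsilon$, and propagating the resulting constants through Lemmas 4.1 and 4.4 to obtain $D(C^*,\varepsilon,\gamma)$, then invoking Lemma \ref{dist}. Your conceptual narrative (invariant cones of slope $O(\varepsilon)$, bounded-distortion geometric sums controlled by conditions (3a)--(3d) and (4), the role of (1) and (5) for uniformity) is a faithful unpacking of what that modification amounts to, so the argument is correct and matches the paper's.
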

\brm Again, for $C^*=2$ this Theorem coincides with Theorem 2 in \cite{Du2}. Notice that conditions (2b) and
(2c) of definition \ref{classofmaps} imply that $|b|, |c| \le C^*$, and for $C^*=2$ this gives an unreasonable
restriction on the class of maps that could be considered. We will need to apply Theorem \ref{distortiontheorem}
for a map which belongs to the class $\mathcal{F}(C^*, \varepsilon, \gamma)$ with larger value of $C^*$, see
Proposition \ref{tbelongstoclassf}. \erm
\begin{proof}[Proof of Theorem \ref{distortiontheorem}.] The straightforward
repetition of the proof of Theorem 2 in \cite{Du2} with the necessary adjustments needed to take the constant
$C^*$ into account proves Theorem \ref{distortiontheorem}. The only place in the proof of Theorem 2 from
\cite{Du2} where the condition $|a|\le \frac{2}{\varepsilon}$ is used is the inequality (3) from Lemma 4.2. If
we use the inequality $|a|\le \frac{C^*}{\varepsilon}$ instead, $6\gamma$ should be replaced by
$\frac{3}{2}(C^*+2)$ there. Due to this change, in Lemma 4.1 from \cite{Du2} one should take $2(C^*+2)\gamma$ instead of
$8\gamma$ as an upper bound of Lipschitz seminorm $Lip(\sigma^s)$ and $Lip(\sigma^u)$  of functions $\sigma^s$
and $\sigma^u$ that describe stable and unstable foliations. This leads to similar changes in the statement of
Lemma 4.4 from \cite{Du2} and in the estimate of the distortion. Finally we use Lemma \ref{dist}  to finish the proof of Theorem
\ref{distortiontheorem}. \end{proof}

\section{Birkhoff and Gelfreich normal forms}\label{s.splitting}

In this section we collect some known results on quadratic families, Birkhoff normal form, and splitting of separatrices, in many cases reformulating them to adapt to our setting, so preparing to use them later in Section \ref{s.thickhorseshoe}.

 \subsection{Uniqueness of the area preserving quadratic family}

The non-trivial quadratic diffeomorphism of the plane which preserve area and orientation with a fixed point can be put after a linear change of coordinates into the Henon family (\ref{hf}), see \cite{H}.
  In particular,
 we can consider the family
 \beq\label{initialfamili}
F_{\varepsilon} :(x,y)\mapsto (x+y-x^2+\varepsilon,\
y-x^2+\varepsilon)
  \eneq
  instead of (\ref{hf}). In this form it is a partial case of a so
  called {\it generalized standard family}, and it was considered in
  \cite{G1}.

  Another form of the quadratic area preserving family\footnote{See Section 4 in \cite{Du4} for an explicit change of coordinates sending the family $\widetilde{F}_{\varepsilon}$ to the Henon family (\ref{hf}). Notice also that in Subsection \ref{ss.rescaling} the family (\ref{initialfamili}) is send to the family (\ref{family}) by an affine change of coordinates, and  the change of coordinates $(u,v)\mapsto (2x, 2\sqrt{2} y)$ together with reparametrization $\varepsilon=\sqrt{2}\delta$ conjugates the family (\ref{family}) with the family (\ref{initialfamili2}).}, namely
  \beq\label{initialfamili2}
\widetilde{F}_{\varepsilon} :(x,y)\mapsto (x+\varepsilon (y+\varepsilon(x-x^2)),\
y+\varepsilon(x-x^2)),
  \eneq
  was used in \cite{GSa}.

  \subsection{Rescaling and the family of maps close to identity}\label{ss.rescaling}

Let us start with the family $F_{\varepsilon}$ (\ref{initialfamili}). Consider the following family of the affine coordinate
changes:
$$
\Upsilon_{\delta}\begin{pmatrix}
               u \\
               v \\
             \end{pmatrix}=-\begin{pmatrix}
               \delta^2 \\
               0 \\
             \end{pmatrix}+\begin{pmatrix}
                             \delta^2 & 0 \\
                             0 & \delta^3 \\
                           \end{pmatrix}\begin{pmatrix}
               u \\
               v \\
             \end{pmatrix},
$$
where $\delta=\varepsilon^{\frac{1}{4}}$. Then
$$
\Upsilon_{\delta}^{-1}\circ F_{\delta^4}\circ
\Upsilon_{\delta}\begin{pmatrix}
               u \\
               v \\
             \end{pmatrix}=\begin{pmatrix}
               u+\delta v \\
               v+\delta(2u-u^2) \\
             \end{pmatrix}+\delta^2\begin{pmatrix}
               2u-u^2 \\
               0 \\
             \end{pmatrix}.
$$
 Now we have a
family of area preserving maps close to identity. For each of these maps the origin
is a saddle with eigenvalues
$$
\lambda_1=1+\delta^2+\sqrt{\delta^4+2\delta^2}=1+\sqrt{2}\delta
+O(\delta^2)>1,
$$
$$
\lambda_2=\lambda_1^{-1}=1+\delta^2-\sqrt{\delta^4+2\delta^2}=1-\sqrt{2}\delta
+O(\delta^2)<1.
$$
Set $h=\log \lambda_1$. By definition
$h=\sqrt{2}\delta+O(\delta^2)$, and $\delta$ can be given by
implicit function of $h$. Define the following (rescaled and
reparametrized)  family \beq\label{family} \frak{F}_h: (u,v)\mapsto
(u,v)+\delta(v,2u-u^2)+\delta^2(2u-u^2,0), \ \delta=\delta(h). \eneq

\subsection{Birkhoff normal form}

A real analytic area preserving
diffeomorphism of a two dimensional domain in a neighborhood of a
saddle with eigenvalues $(\lambda, \lambda^{-1})$ by an analytic
change of coordinate can be reduced to the Birkhoff normal form
(\cite{S}, see also \cite{SM}): \beq N(x,y)=(\Delta(xy)x,
\Delta^{-1}(xy)y), \eneq where
$\Delta(xy)=\lambda+a_1xy+a_2(xy)^2+\ldots$ is analytic.

We need a generalization of this Birkhoff normal form for one-parameter families. In the following statement we combine the results of Proposition 3.1 from \cite{FS} and of Proposition 4 from \cite{Du4}.

\bthm
There exists a neighborhood $U$ of the origin such that for all
$h\in (0,h_0)$ there exists a coordinate change $C_h:U\to \widehat{U}$  with
the following properties:

1. If $N_h=C_h\frak{F}_hC_h^{-1}$ then $N_h(u,v)=(\Delta_h(uv)u, \Delta_h^{-1}(uv)v)$, where
$\Delta_h(uv)=\lambda(h)+a_1(h)uv+a_2(h)(uv)^2+\ldots$ is analytic.

2. $C^3$-norms of the coordinate changes $C_h$ are uniformly bounded
with respect to the parameter $h$.

3. $\Delta_h(s)\ge 1$ is a smooth function of $s$ and $h$.
 \ethm

\brm The second property is not formulated explicitly in \cite{FS} or in \cite{Du4} but it immediately follows from Cauchy estimates.
Indeed, it follows from the proof there that the map $C_h$ is analytic and radius of convergence of the
corresponding series is uniformly bounded from below. \erm

Also we will need the following property of the parametric Birkhoff normal form for the family $\frak{F}_h$.

\blm\label{analogduarte} For some constant $C>0$ and  small enough
$h_0>0$ and $s_0>0$ the following holds. For all $h\in [0, h_0)$ and
$s\in [0,s_0)$

1. $\log \Delta_h(s)\ge C^{-1}h,$

2. $|\Delta_h'(s)|\le Ch,$

3. $|\Delta_h''(s)|\le Ch.$\elm

\brm This Lemma is similar to Lemma 6.3 from \cite{Du2}, but in our case we have one, not two parameter family,
and therefore those two statements are essentially different.  \erm
\begin{proof}[Proof of Lemma \ref{analogduarte}.]
Consider $g(s,h)=\log \Delta_h(s)$. We have $g(s,0)=0, $ $g(0,h)=h$, and $g$ is a smooth function of $(s,h)$.
This implies that for small enough $s_0>0$, $h_0>0$ and large $C>0$ we have $g(s,h)\ge C^{-1}h$ for all $s\in
[0,s_0]$ and $h\in [0,h_0]$.

 From
the explicit form of the family $\frak{F}_h$ (\ref{family}) we see
that $\frak{F}_h\to \text{\rm Id}$ as $h\to 0$ in $C^r$-norm for
every $r\in \Bbb{N}$. Since $C^3$-norms of $C_h$ and $C_h^{-1}$ are
uniformly bounded, this implies that $N_h\to \text{\rm Id}$ in
$C^2$-norm as $h\to 0$. In particular,
$$
DN_h(x,y)=\begin{pmatrix}
            \Delta_h(xy)+\Delta'_h(xy)xy & x^2\Delta_h'(xy) \\
            -\frac{\Delta_h'(xy)y^2}{\Delta^2_h(xy)} & \Delta_h^{-1}(xy)-\frac{\Delta_h'(xy)xy}{\Delta^2(xy)} \\
          \end{pmatrix}\to \begin{pmatrix}
                             1 & 0 \\
                             0 & 1 \\
                           \end{pmatrix}
$$
as $h\to 0$ uniformly in $(x,y)\in U$.  This implies that $\Delta_h'(s)\to 0$ as $h\to 0$ uniformly in $s\in
[s_1, s_0]$ for every $s_1\in (0,s_0)$. Also $\Delta_0(s)=1$ for every $h\in [0, h_0)$, so $\Delta_0'(s)=0$.
Since $\Delta_h'(s)$ is a continuous function, this implies that $\Delta_h'(s)\to 0$ as $h\to 0$ uniformly in
$s\in [0, s_0]$. Since $\Delta_h'(s)$ is a smooth function of $(s,h)$, this implies that $|\Delta_h'(s)|\le Ch$
if $C>0$ is large enough. Similarly one can show that $|\Delta_h''(s)|\le Ch$. \end{proof} 

\begin{figure}
  \includegraphics[width=1\textwidth]{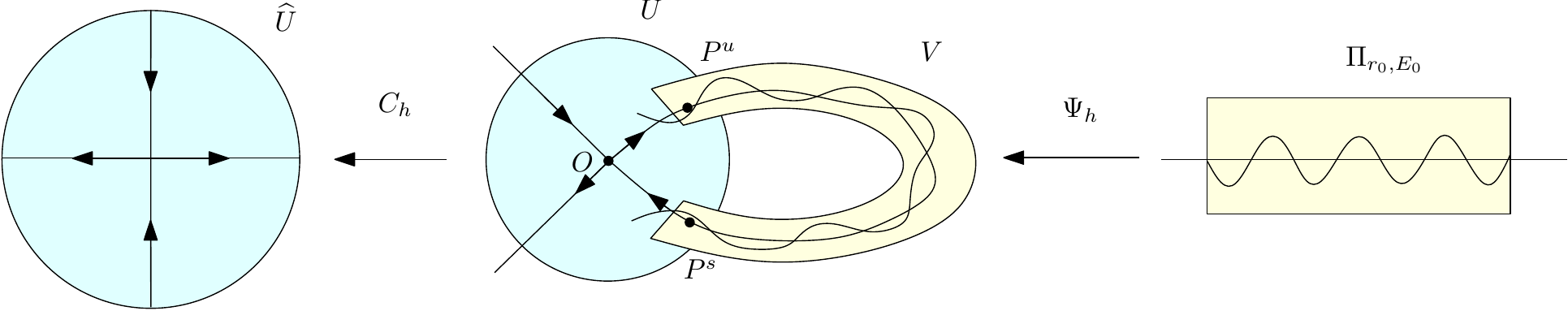}\\
  \caption{Birkhoff and Gelfreich normal forms for $\frak{F}_h$}\label{fig.normalforms}
\end{figure}

\subsection{Gelfreich normal form and splitting of separatrices}

The family $\frak{F}_h$ is closely related to the conservative vector field \beq\left\{
                \begin{array}{ll}\label{vectorfield}
                  \dot x=y,  \\
                  \dot y=2x-x^2.
                \end{array}
              \right.
\eneq

 Namely, due to Theorems ${\rm A}$ and ${\rm A'}$ from \cite{FS1} (see also
Proposition 5.1 from \cite{FS}) the separatrix phase curve of the vector field (\ref{vectorfield}) (let us
denote it by $\sigma$) gives a good approximation of some finite pieces of $W_{{\frak{F}_h}}^s(0,0)$ and $W_{\frak{F}_h}^u(0,0)$. Denote by
$\widetilde{\sigma}$ a segment of separatrix $\sigma$ that contains some points $P^u\in W^u_{loc}(0,0)\cap U$
and $P^s\in W^s_{loc}(0,0)\cap U$  and by $V$ a neighborhood of $\widetilde{\sigma}$. Denote by
$\widetilde{W}_h^s(0,0)$ the finite piece of $W_{\frak{F}_h}^s(0,0)$ between the points where $W^s_{\frak{F}_h}(0,0)$ leaves $U$ for
the first time and the first point where $W^s_{\frak{F}_h}(0,0)$ returns to $U$ again. Define $\widetilde{W}^u_h(0,0)$ in a
similar way. Then $\widetilde{W}_h^s(0,0)$ and $\widetilde{W}_h^u(0,0)$ are always (for all $h\in (0,h_0)$) in
$V$.

The restriction of the map ${\frak{F}_h}$ on the local separatrix
$W_{loc}^u(0,0)$ is conjugated with a multiplication
$\xi\mapsto \lambda\xi$, $\xi\in (\Bbb{R}, 0)$. Let us call a
parameter $t$ on $W_{{\frak{F}_h}}^u(0,0)$ {\it standard} if it is
obtained by a substitution of $e^t$ instead of $\xi$ into the
conjugating function. Such a parametrization is defined up to a
substitution $t\mapsto t+const$.

Denote $\Pi_{r_0, E_0}=\{(t,E)\in \Bbb{R}^2\ |\ |\,t|<r_0,
|\,E|<E_0\}$.

In the following Theorem we summarize the results from \cite{G1, G2, G3, GSa, BG}.
 \bthm\label{t.Gelfreich}  There
are neighborhood $V$ of the segment of $\sigma$ between points $P^u$
and $P^s$  and constants $r_0$ and $E_0$ such that for some $h_0>0$
and all $h\in (0,h_0)$ there exists a map $\Psi_h:\Pi_{r_0, E_0}\to
\Bbb{R}^2$ with the following properties:

1. $\Psi_h(\Pi_{r_0, E_0})\supset V$;

2. $\Psi_h$ is real analytic;

3. $\Psi_h$ is area preserving;

4. $\Psi_h$ conjugates the map $\frak{F}_h$ with the shift $H_h:(t,E)\mapsto (t+h, E)$;

5. $C^3$-norms of $\Psi_h$ and $\Psi_h^{-1}$ are uniformly bounded with respect to $h\in (0,h_0)$.

6. $\Psi^{-1}_h(\widetilde{W}_h^u)=\{E=0\}$, and $t$ gives a
standard parametrization of the unstable manifold;

7. Stable manifold
$\Psi_h^{-1}(\widetilde{W}^s_h)$ can be represented as a graph of a
real-analytic $h$-periodic function $\Theta(t)$ such that
\begin{align}\label{thetastar}
    \Theta(t)=8\sqrt{2}|\,\Theta_1|\,h^{-6}e^{-2\pi^2\slash h}\sin \frac{2\pi
t}{h}+O(h^{-5}e^{-2\pi^2\slash h});\\
\dot\Theta(t)=16\sqrt{2}\pi|\,\Theta_1|\,h^{-7}e^{-2\pi^2\slash h}\cos \frac{2\pi
t}{h}+O(h^{-6}e^{-2\pi^2\slash h});\\
\ddot\Theta(t)=-32\sqrt{2}\pi^2|\Theta_1|\,h^{-8}e^{-2\pi^2\slash h}\sin \frac{2\pi
t}{h}+O(h^{-7}e^{-2\pi^2\slash h});
\end{align}

8. $|\,\Theta_1|\ne 0$.
\ethm

In \cite{G3} existence of the normal form $\Psi_h$ that satisfies properties 1.-6. was shown. In \cite{G1, G2}  the splitting of separatrices (the form of the function $\Theta(t)$) was studied for the initial family $F_{\varepsilon}$ (\ref{initialfamili}). The fact that $|\,\Theta_1|\ne 0$ was proved in \cite{GSa}.
 In the recent paper \cite{BG} the whole asymptotic series for $\Theta(t)$ is presented (in fact, for a much wider class of families that includes area preserving Henon  family), but here we stated only the partial case of that result which we will need in Section \ref{s.thickhorseshoe}.

\brm\label{remarkaboutmu} To simplify the notation define the function \beq \mu(h)=16\sqrt{2}\pi
|\,\Theta_1|\,h^{-7}\exp{(-2\pi^2\slash h)}. \eneq Notice that the angle between $\widetilde{W}_h^u$ and
$\widetilde{W}_h^s$ at the homoclinic point in the normalized coordinates is equal to $\mu(h)(1+O(h))$. The function $\Theta(t)$ can now be represented
in the following way: $$ \Theta(t)=\frac{1}{2\pi} h \mu(h)\sin
\frac{2\pi t}{h}+O(h^2\mu(h)), \ \   \dot\Theta(t)=\mu(h)\cos \frac{2\pi t}{h}+O(h\mu(h)), \ \ $$ $$
\ddot\Theta(t)=-2\pi h^{-1} \mu(h)\sin \frac{2\pi t}{h}+O(\mu(h)).
$$\erm

\section{Construction of a thick horseshoe for area preserving Henon family}\label{s.thickhorseshoe}

Here we prove Theorem \ref{t.henonfamily}. In order to do so we construct the horseshoe for the first return map in a neighborhood of a saddle, verify the conditions of the Duarte's Distortion Theorem from Section \ref{s.distest}, and obtain estimates on one-sided thicknesses of the constructed horseshoe. Relations between one-sided thicknesses and Hausdorff dimension obtained in Section \ref{s.leftrightCantor} will imply the required result.

\subsection{Construction of the domain for the first return map}

Let $q_h^u$ be the closest to $P^u\in \sigma$ point of intersection of $\widetilde{W}_h^u$ and
$\widetilde{W}_h^s$. Consider a finite sequence of images of $q_h^u$ under the map $\frak{F}_h$ that belong to
the neighborhood $V$, $\{q_h^u, \frak{F}(q_h^u), \frak{F}^2(q_h^u), \ldots\}$. Let $q_h^s$ be the point of this
sequence closest to the point $P^s\in \sigma$. Define $k(h)\in \Bbb{N}$ by $\frak{F}_h^{k(h)}(q_h^u)=q^s_h$.
Take the vector $v=(1,0)\in T_{C_h(q^u_h)}\widehat{U}$ and consider the vector $w=(w_1, w_2)=D(C_h\circ \Psi_h\circ
H^{k(h)}\circ \Psi_h^{-1}\circ C_h^{-1})v\in T_{C_h(q_h^s)}\widehat{U}$. Without loss of generality we can assume that $w_1>0$
(otherwise just take a homoclinic point between $q_h^u$ and $\frak{F}(q_h^u)$ instead of $q_h^u$). Scaling, if
necessary, we can assume that in the Birkhoff normalizing coordinates we have $C_h(q_h^u)=(1,0)$, $C_h(q_h^s)=(0,1)$.

Fix small $\nu>0$. Recall that $\lambda=\Delta_h(0)=e^h$. Set \beq n=\left[-\frac{\log
(\mu(h)h^{1+\nu})}{2h}\right]. \eneq Due to this choice $\lambda^{-2n}\approx \mu(h)h^{1+\nu}$. More precisely,
$\lambda^{-2n}\in [\mu(h)h^{1+\nu}, \lambda^2\mu(h)h^{1+\nu})$.

\brm Notice that this choice of $n$ for $\nu=\frac{1}{2}$ is analogous to the formula (7) in \cite{Du2}.  \erm

Define the following lines:
\begin{align*}
    \tau_{(1,0)}^+=\{x=\lambda^{\frac{1}{10}}\},\ \ \ \
\tau_{(1,0)}^-=\{x=\lambda^{-\frac{1}{10}}\},\\
\tau_{(0,1)}^+=\{y=\lambda^{\frac{1}{10}}\},\ \ \ \
\tau_{(0,1)}^-=\{y=\lambda^{-\frac{1}{10}}\}.
\end{align*}
Denote by $S$ the square formed by coordinate axes, 
$N_h^n(\tau^+_{(0,1)}),$
and
$N_h^{-n}(\tau^+_{(1,0)})$. The bottom and left edges of $S$ have the size  \beq\label{e.defl} l=\lambda^{-n+\frac{1}{10}}.
\eneq Notice that since $DN_h$ is close to the linear map $\begin{pmatrix}
                                                             \lambda & 0 \\
                                                             0 & \lambda^{-1} \\
                                                           \end{pmatrix},$ the curve $N_h^n(\tau^+_{(0,1)})$ (resp.,
$N_h^{-n}(\tau^+_{(1,0)})$) is $C^1$-close to a horizontal (resp., vertical) line.  Denote by $R^u$ and $R^s$
the rectangles formed by $x$-axis, 
$\tau_{(1,0)}^+$, $\tau_{(1,0)}^-$ and $N_h^{2n}(\tau^+_{(0,1)})$, and by $y$-axis, 
$\tau_{(0,1)}^+$, $\tau_{(0,1)}^-$ and $N_h^{-2n}(\tau^+_{(1,0)})$, respectively. Notice that
$R^u=N_h^{n}(S)\cap \{x\ge \lambda^{-\frac{1}{10}}\}$ and $R^s=N_h^{-n}(S)\cap \{y\ge
\lambda^{-\frac{1}{10}}\}$.

\begin{figure}
  \includegraphics[width=1\textwidth]{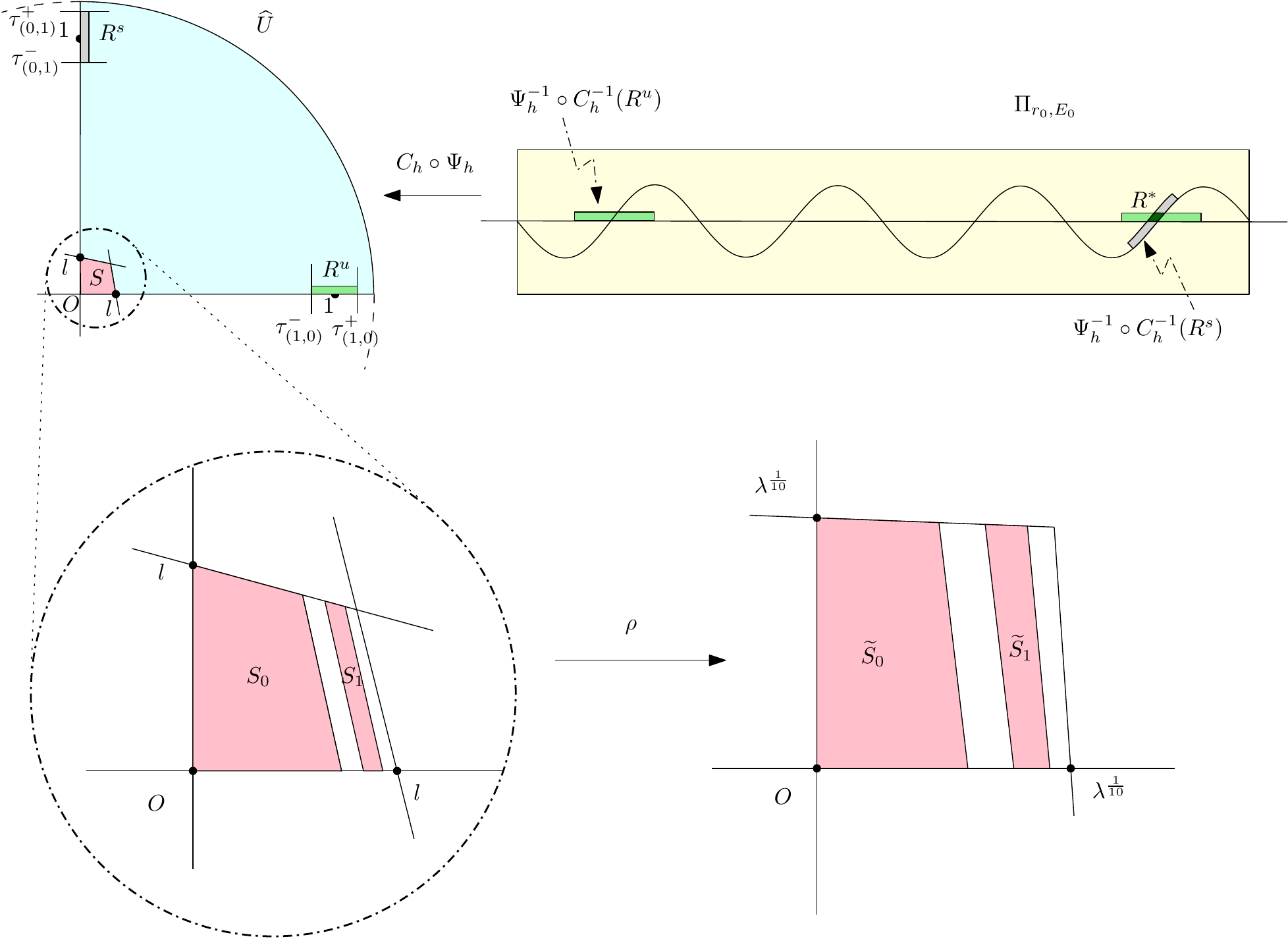}\\
  \caption{Construction of the horseshoe.}\label{fig.horseshoe}
\end{figure}

Denote by $R^*$ the intersection (see Fig. \ref{fig.horseshoe}):
$$
R^*=H^{k(h)}\circ \Psi_h^{-1}\circ C_h^{-1} (R^u)\cap
\Psi_h^{-1}\circ C^{-1}_h(R^s).
$$
Now consider the rectangles \beq S_0=S\cap N_h^{-1}(S)\ \ \text{and}\ \ \ S_1=N_h^{-n}\circ  C_h\circ \Psi_h\circ
H^{-k(h)}(R^*) \eneq and define the first return map
$$
T(x,y)=\left\{
           \begin{array}{ll}
            N_h(x,y), & \hbox{if $(x,y)\in S_0$;} \\
            N_h^{n}\circ C_h\circ \Psi_h\circ H^{k(h)}\circ  \Psi_h^{-1}\circ C_h^{-1}\circ N_h^{n}(x,y), & \hbox{if $(x,y)\in S_1$.}
          \end{array}
        \right.
$$

\subsection{Renormalization}

We are going to prove that the map $T$ has a hyperbolic invariant set in $S$ and to estimate its Hausdorff dimension with respect to the parameter $h$. It is convenient to renormalize the map $T$. Namely, using the approach from \cite{Du2}, set
$$
\rho:S\to [0,2]\times [0,2], \ \rho(x,y)=(\Delta_h^n(xy)x, \Delta_h^n(xy)y),
$$
and define
$$
\rho(S_0)=\widetilde{S}_0,\ \rho(S_1)=\widetilde{S}_1,\ \widetilde{T}:\widetilde{S}_0\cup \widetilde{S}_1\to [0,2]\times [0,2], \  \widetilde{T}=\rho\circ T\circ \rho^{-1}.
$$
Notice that $\rho^{-1}$ is defined by
$$
\rho^{-1}(x,y)=(\Delta_h^{-n}(t(xy))x, \Delta_h^{-n}(t(xy))y),
$$
where $t(s)$ is such that $t(\Delta_h^{2n}(xy)xy)=xy$, or, equivalently,
\begin{equation}\label{e.t}
t(s)\cdot \Delta_h^{2n}(t(s))=s.
\end{equation}
\blm\label{l.t}
For some $C>0$ independent of $h$ the following holds. If $(x_0,y_0)\in S$, $\rho(x_0,y_0)=(x, y)$, and $s=xy$ then
 $|t(s)|$, $|t'(s)|$, and $|t''(s)|$ are bounded by $C\lambda^{-2n}$.
\elm
\begin{proof}[Proof of Lemma \ref{l.t}] Since $(x_0, y_0)\in S$, we have
$$
0\le t(s)=t(\Delta_h^n(x_0y_0)x_0\cdot\Delta_h^n(x_0y_0)y_0)=x_0y_0\le 4\lambda^{-2n}.
$$
Differentiating (\ref{e.t}) we get
$$
t'(s)\Delta_h^{2n}(t(s))+2nt(s)\Delta_h^{2n-1}(t(s))\Delta'_h(t(s))t'(s)=1,
$$
therefore
$$
|t'(s)|=\left|\Delta_h^{-2n}(t(s))\frac{1}{1+2nt(s)\frac{\Delta'_h(t(s))}{\Delta_h(t(s))}}\right|\le 4\lambda^{-2n}
$$
Differentiating once again, we get
$$
t''(s)(\lambda^{2n}+O(n))+t'(s)O(n)=0, \ \text{\rm so}\ \ |t''(s)|\le \frac{|t'(s)|O(n)}{\lambda^{2n}|1+O(n\lambda^{-2n})|}<\lambda^{-2n}
$$
\end{proof}
Notice that
$$
\widetilde{T}|_{\widetilde{S}_0}(x,y)=\rho\circ N_h\circ \rho^{-1}(x,y)=(\Delta_h(t(xy))x, \Delta^{-1}_h(t(xy))y).
$$
\blm\label{l.Dt}
We have
$$
D\widetilde{T}|_{\widetilde{S}_0}(x,y)=\begin{pmatrix}
                                         \lambda & 0 \\
                                         0 & \lambda^{-1} \\
                                       \end{pmatrix}+\begin{pmatrix}
                                                       O(\lambda^{-2n}h) & O(\lambda^{-2n}h) \\
                                                       O(\lambda^{-2n}h) & O(\lambda^{-2n}h) \\
                                                     \end{pmatrix}.
$$
\elm
\begin{proof}[Proof of Lemma \ref{l.Dt}]
Differentiating explicitly we get
$$
D\widetilde{T}|_{\widetilde{S}_0}(x,y)=\begin{pmatrix}
                                         \Delta_h(t(xy))+xy\Delta_h'(t(xy))t'(xy) & x^2\Delta_h'(t(xy))t'(xy) \\
                                         -\frac{y^2}{\Delta_h^2(t(xy))}\Delta_h'(t(xy))t'(xy) & \Delta_h^{-1}(t(xy))-xyt'(xy)\frac{\Delta_h'(t(xy))}{\Delta_h^2(t(xy))} \\
                                       \end{pmatrix}.
$$
Now the required estimates follow from Lemmas \ref{analogduarte} and \ref{l.t}.
\end{proof}

In order to study $\widetilde{T}|_{\widetilde{S}_1}$ one can introduce the following maps:
$$
G:\widetilde{S}_1\to R^u, \ G=N_h^n\circ \rho^{-1}, \ G(x,y)=(x, \Delta_h^{-2n}(t(xy))y),
$$
and
$$
\widehat{G}: R^s \to\widetilde{S}_1, \ \widehat{G}=\rho \circ N_h^n, \ \widehat{G}(x,y)=(\Delta_h^{2n}(xy)x, y).
$$
With this notation we have
$$
\widetilde{T}|_{\widetilde{S}_1}=\widehat{G}\circ C_h\circ \Psi_h\circ H^{k(h)}\circ  \Psi_h^{-1}\circ C_h^{-1}\circ G
$$

\subsection{Cone condition}

The coordinate changes $C_h\circ \Psi_h$ and $\Psi_h^{-1}\circ
C_h^{-1}$ have uniformly bounded $C^3$-norms. Assume that their
$C^3$-norms are bounded by some constant $C_0$.

Let us introduce the following cone fields in
$\tilde{S}_0\cup\tilde{S}_1$: \beq K^u(x,y)=\{\bar{v}=(v_1, v_2)\in
T_{(x,y)}\tilde{S}_i\ |\ |\,v_1|>0.01C_0^{-6}h^{-1-\nu} |v_2|\}, \
\ \text{and} \eneq \beq K^s(x,y)=\{\bar{v}=(v_1, v_2)\in
T_{(x,y)}\tilde{S}_i\ |\ |\,v_2|>0.01C_0^{-6}h^{-1-\nu}
|v_1|\}.\eneq

\blm\label{conecondition1}{\bf (Cone condition for $\tilde{S}_0$)}
For small enough $h$ the following holds.

For every vector $\bar{v}\in K^u(x,y)$, $(x,y)\in \tilde{S}_0$, we
have $D\tilde{T}_{(x,y)}(\bar{v})\in K^u(\tilde{T}(x,y))$, and if
$D\tilde{T}_{(x,y)}(\bar{v})=\bar{w}\equiv (w_1,w_2)$ then $|w_1|\ge
\lambda^{0.9}|v_1|$.

For every vector $\bar{v}\in K^s(x,y)$, $(x,y)\in
\tilde{T}(\tilde{S}_0)$, we have
$D\tilde{T}_{(x,y)}^{-1}(\bar{v})\in K^s(\tilde{T}^{-1}(x,y))$, and
if $D\tilde{T}^{-1}_{(x,y)}(\bar{v})=\bar{w}\equiv (w_1,w_2)$ then
$|w_2|\ge \lambda^{0.9}|v_2|$. \elm

\begin{proof}[Proof of Lemma \ref{conecondition1}.] This follows directly
from Lemma \ref{l.Dt}. \end{proof}


\blm\label{conecondition2}{\bf (Cone condition for $\tilde{S}_1$)}
For small enough $h$ the following holds.

For every vector $\bar{v}\in K^u(x,y)$, $(x,y)\in \tilde{S}_1$, we
have $D\tilde{T}_{(x,y)}(\bar{v})\in K^u(\tilde{T}(x,y))$, and if
$D\tilde{T}_{(x,y)}(\bar{v})=\bar{w}\equiv (w_1,w_2)$ then
$|w_1|\ge0.01C_0^{-4}h^{-1-\nu}|v_1|$ and $|\bar{w}|\le
25C_0^4h^{-1-\nu}|\bar{v}|$.

 For every vector
$\bar{v}\in K^s(x,y)$, $(x,y)\in \tilde{T}(\tilde{S}_1)$, we have
$D\tilde{T}_{(x,y)}^{-1}(\bar{v})\in K^s(\tilde{T}^{-1}(x,y))$, and
if $D\tilde{T}^{-1}_{(x,y)}(\bar{v})=\bar{w}\equiv (w_1,w_2)$ then
$|w_2|\ge0.01C_0^{-4}h^{-1-\nu}|v_2|$ and $|\bar{w}|\le
25C_0^4h^{-1-\nu}|\bar{v}|$.
 \elm

Before to begin the proof of Lemma \ref{conecondition2} we will formulate and proof two extra  lemmas that give
estimates of the angle between images of vectors under linear maps.

\blm\label{estimateofangle} For any two vectors $\bar{u}_1, \bar{u}_2$  and any linear map $A:\mathbb{R}^2\to
\mathbb{R}^2$ the following inequality holds:
$$
\sin \angle (A\bar{u}_1, A\bar{u}_2)\le \|A\|\cdot \|A^{-1}\|\cdot|\sin \angle (\bar{u}_1, \bar{u}_2)|
$$
 \elm

\begin{proof}[Proof of Lemma \ref{estimateofangle}.] Take two vectors $\bar{s}_1$ and $\bar{s}_2$ such that
$\bar{s}_2\perp (\bar{s}_1-\bar{s}_2)$ and $\bar{s}_1\| \bar{u}_1$, $\bar{s}_2\| \bar{u}_2$. In this case $|\sin
\angle (\bar{u}_1, \bar{u}_2)|=\frac{|\bar{s}_1-\bar{s}_2|}{|\bar{s}_1|}$. Now  we have
$$
\sin \angle (A\bar{u}_1, A\bar{u}_2)\le \frac{|A\bar{s}_1-A\bar{s}_2|}{|A\bar{s}_1|}\le
\frac{\|A\||\bar{s}_1-\bar{s}_2|}{\|A^{-1}\|^{-1}|\bar{s}_1|}=\|A\|\cdot \|A^{-1}\|\cdot |\sin \angle
(\bar{u}_1, \bar{u}_2)|
$$
\end{proof} 

\blm\label{estimateofangleadd} For any vector $\bar{u}\in \Bbb{R}^2$, $\bar{u}\ne 0$, and any linear maps $A,B:
\Bbb{R}^2\to \Bbb{R}^2$ the following inequality holds:
$$
\sin \angle (A\bar{u}, B\bar{u})\le \|A\|\cdot \|A-B\|.
$$
 \elm

\begin{proof}[Proof of Lemma \ref{estimateofangleadd}.]
 $$
\sin \angle (A\bar{u}, B\bar{u})\le \frac{|A\bar{u}-B\bar{u}|}{|A\bar{u}|}\le
\frac{\|A-B\|}{\|A\|^{-1}}=\|A\|\cdot \|A-B\|.
 $$
\end{proof}

\begin{proof}[Proof of Lemma \ref{conecondition2}.] We will prove the first
part of the statement. The proof of the second part is completely
the same.

Take a vector $\bar{v}\equiv(v_1, v_2)\in K^u(x,y), \ (x,y)\in
\tilde{S}_1. $

Consider the following points:
$$
P_1=(x,y)\in \widetilde{S}_0, \ \ \ P_2=G(P_1)\in R^u, \ \ \
P_3=\Psi_h^{-1}\circ C_h^{-1}(P_2)\in \Pi_{r_0, E_0},
$$
$$
P_4=H^{k(h)}(P_3)\in \Pi_{r_0, E_0},\ \ \ P_5=C_h\circ
\Psi_h(P_4)\in R^s, \ \ \ P_6=\widehat{G}(P_5)\in T(\widetilde{S}_1),
$$
and denote by $(x_i, y_i)$ the coordinates of the point $P_i$, $i=1,
\ldots, 6$.
 We will follow the image of the vector along this
sequence of points and estimate the angle between that image and
coordinate axes and the size of the image. Denote by $\bar{v}^{(i)}=({v}^{(i)}_1, {v}^{(i)}_2)$ the image of $\bar{v}$ at the point $P_i$, $i=1, \ldots, 6$. In particular, $\bar{v}^{(1)}=\bar{v}$.

{\it Step 1.} Let us first estimate the norm and inclination of the vector $\bar{v}^{(2)}=DG(\bar{v}^{(1)})$. We have
$$
DG(x,y)=\begin{pmatrix}
  1 & 0 \\
  \frac{2ny^2\Delta_h'(t(xy))t'(xy)}{\Delta_h^{2n+1}(t(xy))} & \Delta_h^{-2n}(t)-\frac{2nxy\Delta'_h(t(xy))t'(xy)}{\Delta_h^{2n+1}(t(xy))} \\
\end{pmatrix}=\begin{pmatrix}
                1 & 0 \\
                O(n\lambda^{-4n}h) & \lambda^{-2n}+O(n\lambda^{-4n}h) \\
              \end{pmatrix}.
$$
Therefore
$$
\bar{v}^{(2)}=DG(\bar{v}^{(1)})=\begin{pmatrix}
                                  v_1 \\
                                  v_1O(n\lambda^{-4n}h)+v_2(\lambda^{-2n}+O(n\lambda^{-4n}h)) \\
                                \end{pmatrix},
$$
and hence (since $\bar{v}\equiv(v_1, v_2)\in K^u(x,y)$) we have $\frac{1}{2}|\bar{v}^{(1)}|\le |\bar{v}^{(2)}|\le 2|\bar{v}^{(1)}|$, and
$$
\frac{|{v}^{(2)}_2|}{|{v}^{(2)}_1|}=O(n\lambda^{-4n}h)+\lambda^{-2n}\frac{v_2^{(1)}}{v_1^{(1)}}
+\frac{v_2^{(1)}}{v_1^{(1)}}O(n\lambda^{-4n}h)<200\lambda^{-2n}C_0^6h^{1+\nu}
$$

{\it Step 2.} We have $\bar{v}^{(3)}=D_{P_2}(\Psi^{-1}_h\circ C_h^{-1})\bar{v}^{(2)}$. Therefore
$C_0^{-1}|\bar{v}^{(2)}|\le |\bar{v}^{(3)}|\le C_0|\bar{v}^{(2)}|$. Let us estimate the angle between
$\bar{v}^{(3)}$ and the vector $\bar{e}_1=(1,0)$. Let $P^*$ be a projection of the point $P_2$ to the line
$\{y=0\}$. Then $\ \text{dist}(P_2, P^*)\le 2\lambda^{-2n}$. Since the image of the
line $\{y=0\}$ under the map $\Psi^{-1}_h\circ C_h^{-1}$ is a line $\{E=0\}$, the image of the vector
$\bar{e}_1=(1,0)$ under the differential $D(\Psi^{-1}_h\circ C_h^{-1})$ has the form $(s,0)=s\bar{e}_1$. Now we
have
$$
\angle (\bar{v}^{(3)}, \bar{e}_1)=\angle (\bar{v}^{(3)}, s\bar{e}_1)=\angle\left(D_{P_2}(\Psi^{-1}_h\circ
C_h^{-1})\bar{v}^{(2)}, D_{P^*}(\Psi^{-1}_h\circ C_h^{-1})\bar{e}_1\right)\le
$$
$$
\le \angle \left(D_{P_2}(\Psi^{-1}_h\circ C_h^{-1})\bar{v}^{(2)}, D_{P_2}(\Psi^{-1}_h\circ
C_h^{-1})\bar{e}_{1}\right)+\angle\left(D_{P_2}(\Psi^{-1}_h\circ C_h^{-1})\bar{e}_{1}, D_{P^*}(\Psi^{-1}_h\circ
C_h^{-1})\bar{e}_{1}\right)
$$
Now let us estimate each of the summands. Since all the angles that we consider are small, we can always assume
that $\alpha<2\sin\alpha<2\alpha$ for all angles $\alpha$ that we consider. Due to Lemma \ref{estimateofangle}
we have
\begin{multline}
\angle \left(D_{P_2}(\Psi^{-1}_h\circ C_h^{-1})\bar{v}^{(2)}, D_{P_2}(\Psi^{-1}_h\circ
C_h^{-1})\bar{e}_{1}\right)\le
\\
\le 2\sin \angle \left(D_{P_2}(\Psi^{-1}_h\circ C_h^{-1})\bar{v}^{(2)}, D_{P_2}(\Psi^{-1}_h\circ
C_h^{-1})\bar{e}_{1}\right)\le
\\
\le 2C_0^2|\sin \angle (\bar{v}^{(2)}, \bar{e}_1)|\le 2C_0^2\cdot 200\lambda^{-2n}C_0^6h^{1+\nu} =400C_0^8\lambda^{-2n}h^{1+\nu}
\end{multline}
Due to Lemma \ref{estimateofangleadd} we have
\begin{multline}
    \angle\left(D_{P_2}(\Psi^{-1}_h\circ C_h^{-1})\bar{e}_{1}, D_{P^*}(\Psi^{-1}_h\circ
C_h^{-1})\bar{e}_{1}\right)\le \\
\le 2\sin \angle\left(D_{P_2}(\Psi^{-1}_h\circ C_h^{-1})\bar{e}_{1}, D_{P^*}(\Psi^{-1}_h\circ
C_h^{-1})\bar{e}_{1}\right)\le\\
\le 2C_0\cdot C_0\,\text{dist}(P_2, P^*)\le 4C_0^2
\lambda^{-2n}
\end{multline}
Finally (if $h$ is small enough and $\lambda=e^h$ is close to 1) we
have \beq\label{angleestimatevthree} \angle (\bar{v}^{(3)},
\bar{e}_1)\le 400C_0^8\lambda^{-2n}h^{1+\nu}+4C_0^2\lambda^{-2n}< 5C_0^2\lambda^{-2n}
\eneq

 {\it Step 3.} Since $H(t, E)=(t+h, E)$, the estimates for $\bar{v}^{(3)}$ work for
  $\bar{v}^{(4)}=DH^{k(h)}(\bar{v}^{(3)})$ also.

{\it Step 4.} Consider $\bar{v}^{(5)}=D_{P_4}(C_h\circ
\Psi_h)\bar{v}^{(4)}\in T_{P_5}\widehat{U}$. Notice that \beq
|\bar{v}^{(5)}|\ge C_0^{-1}|\bar{v}^{(4)}|\ge
C_0^{-2}|\bar{v}^{(2)}|>\frac{1}{2}\,C_0^{-2} |\bar{v}^{(1)}|\ \
\ \ \text{and} \eneq \beq |\bar{v}^{(5)}|\le C_0|\bar{v}^{(4)}|\le
C_0^2|\bar{v}^{(2)}|\le 2C_0^2|\bar{v}^{(1)}|.  \eneq Now let us
estimate the angle between $\bar{v}^{(5)}$ and the axis $Oy$. Let
$P^{\#}$ be a projection of the point $P_5$ on the line $\{x=0\}$.
Take the vector $\bar{e}_2=(0,1)\in T_{P^{\#}}\widehat{U}$ and consider the
image $D_{P^{\#}}(\Psi_h^{-1}\circ C_h^{-1})\bar{e}_2\in
T_{\Psi_h^{-1}\circ C_h^{-1}(P^{\#})}\Pi_{r_0, E_0}$. The vector
$D_{P^{\#}}(\Psi_h^{-1}\circ C_h^{-1})\bar{e}_2$ is tangent to the
graph of the function $\Theta(t)$, and due to (\ref{thetastar}) \beq
\frac{1}{2} \mu(h)< \angle (D_{P^{\#}}(\Psi_h^{-1}\circ
C_h^{-1})\bar{e}_2, \bar{e}_1)<2\mu(h).  \eneq From
(\ref{angleestimatevthree}) we have
$$
\frac{1}{5}\mu(h)< \angle (D_{P^{\#}}(\Psi_h^{-1}\circ C_h^{-1})\bar{e}_2, \bar{v}^{(4)})<5\mu(h).
$$
Notice that $\ \text{dist}(\Psi_h^{-1}\circ C_h^{-1}(P^{\#}),
P_4)\le 2C_0\lambda^{-2n}\le 4C_0\mu(h)h^{1+\nu}$.
This implies (in the way similar to Step 3) that for small enough
$h$ \beq\label{angle5e2} \angle (\bar{v}^{(5)}, \bar{e}_2)<
5C_0^2\mu(h)+C_0\cdot C_0\cdot 4C_0\mu(h)h^{1+\nu}<6C_0^2\mu(h),
 \eneq
\beq\label{angle5e2next} \angle (\bar{v}^{(5)},
\bar{e}_2)>\frac{1}{5}\mu(h)C_0^{-2}-4C_0^3\mu(h)h^{1+\nu}>\frac{1}{6}C_0^{-2}\mu(h).
\eneq

 {\it Step 5.} Since $\widehat{G}(x,y)=(\Delta_h^{2n}(xy)x, y)$, we have
\begin{multline}
 D\widehat{G}(x,y)=\begin{pmatrix}
           \Delta_h^{2n}(xy)+2nxy\Delta_h^{2n-1}(xy)\Delta_h'(xy) & 2nx^2\Delta_h^{2n-1}(xy)\Delta_h'(xy) \\
           0 & 1 \\
         \end{pmatrix}=\\
         =\begin{pmatrix}
                         \lambda^{2n}(1+O(n\lambda^{-2n}h)) & O(n\lambda^{-2n}h)) \\
                         0 & 1 \\
                       \end{pmatrix},
\end{multline}
 hence
 $$
 D\widehat{G}(\bar{v}^{(5)})=\begin{pmatrix}
                               v_1^{(6)} \\
                               v_2^{(6)} \\
                             \end{pmatrix}=\begin{pmatrix}
                                             \lambda^{2n}v_1^{(5)}(1+O(n\lambda^{-2n}h))+v_2^{(5)}O(n\lambda^{-2n}h)) \\
                                            v_2^{(5)} \\
                                           \end{pmatrix}.
 $$
 Therefore  $|v_2^{(6)}|=|\bar{v}^{(5)}|\le 2C_0^2|\bar{v}^{(1)}|$ and
\begin{multline}
 |v_1^{(6)}|\ge |v_2^{(5)}|\cdot \left|\lambda^{2n}\frac{v_1^{(5)}}{v_2^{(5)}}(1+O(n\lambda^{-2n}h))+O(n\lambda^{-2n}h)\right|\ge \\ \ge \frac{1}{2}|\bar{v}^{(5)}|\left|\frac{1}{6}C_0^{-2}h^{-1-\nu}(1+O(n\lambda^{-2n}h))+O(n\lambda^{-2n}h)\right|\ge \frac{1}{20}C_0^{-2}h^{-1-\nu}|\bar{v}^{(5)}| \ge \frac{1}{40}C_0^{-4}h^{-1-\nu}|\bar{v}^{(1)}|
\end{multline}
This implies that $\frac{ |v_1^{(6)}|}{ |v_2^{(6)}|}>\frac{1}{80}C_0^{-6}h^{-1-\nu}$, and hence $\bar{v}^{(6)}\in K^u(P_6)$.

Also we have
\begin{multline}
|\bar{v}^{(6)}|\le |v_1^{(6)}|+|v_2^{(6)}|\le |\bar{v}^{(5)}|+2|\bar{v}^{(5)}| \left| \lambda^{2n}6C_0^2\mu(h)(1+O(n\lambda^{-2n}h))+O(n\lambda^{-2n}h)\right|\le \\ \le
2C_0^2|\bar{v}^{(1)}|+2\cdot 2C_0^2|\bar{v}^{(1)}|\left| \lambda^{2n}6C_0^2\mu(h)(1+O(n\lambda^{-2n}h))+O(n\lambda^{-2n}h)\right|\le 25C_0^4h^{-1-\nu}|\bar{v}^{(1)}|
\end{multline}
\end{proof}

\subsection{Markov partition and its thickness}

Standard arguments of the hyperbolic theory (see, for example, \cite{IL}) show that the Cone condition (Lemmas
\ref{conecondition1} and \ref{conecondition2}) together with the geometry of the map $\tilde{T}$ imply the
existence of the hyperbolic fixed point $\mathbf{Q}$ of the map $\tilde{T}$ in $\tilde{S}_1\cap
\tilde{T}(\tilde{S}_1)$. Our choice of the homoclinic points $q_h^u$ and $q_h^s$ implies that the eigenvalues of
$\mathbf{Q}$ are positive. Denote the heteroclinic point where $W^s_{loc}(\mathbf{Q})$ intersects $W^u(O)=\{(0,
x)|x\in \Bbb{R}\}$ by $(x_s, 0)$, and the heteroclinic point where $W^u_{loc}(\mathbf{Q})$ intersects
$W^s(O)=\{(y,0)|y\in \Bbb{R}\}$ by $(0, y_u)$.

Denote the segments of stable and unstable manifolds that connect the fixed points $O$ and $\mathbf{Q}$ with
these heteroclinic points by

$\gamma^u(O)$ -- connects $O$ and $(x_s, 0)$,\ \ \ \, $\gamma^s(O)$ -- connects $O$ and $(0, y_u)$;

$\gamma^u(\mathbf{Q})$ -- connects $\mathbf{Q}$ and $(0, y_u)$, \ \ \ $\gamma^s(\mathbf{Q})$ -- connects
$\mathbf{Q}$ and $(x_s, 0)$.

Notice that $\gamma^s(\mathbf{Q})\subset \tilde{S}_1$ and $\gamma^u(\mathbf{Q})\subset \tilde{T}(\tilde{S}_1)$.

Let $\mathbf{S}$ be the square formed by $\gamma^u(O)$, $\gamma^s(O)$, $\gamma^u(\mathbf{Q})$ and
$\gamma^s(\mathbf{Q})$, $\mathbf{S}\subset \tilde{S}$.

Now define $\mathbf{S}_0=\rho(\rho^{-1}(\mathbf{S})\cap N_h^{-1}\circ
\rho^{-1}(\mathbf{S}))\subset \tilde{S}_0$ and $\mathbf{S}_1=
\rho(S_1\cap \rho^{-1}(\mathbf{S}))\subset \tilde{S}_1$, see Fig \ref{fig.p20}.
Notice that one of the vertical edges of $\mathbf{S}_1$ is
$\gamma^s(\mathbf{Q})$ and another is an intersection of
$\mathbf{S}$ and a vertical edge of $\tilde{S}_1$, and therefore it
intersects $W^u(O)$ at the point
$\rho(N_h^{-n}(1,0))=\rho((\lambda^{-n}, 0))=(1, 0)$. Similarly,
$\tilde{T}(\mathbf{S}_1)$ has a vertical edge
$[1, y_u]\subset Oy$.

Define now $\mathbf{T}=\tilde{T}|_{\mathbf{S}}$. The maximal invariant set of $\mathbf{T}$ in $\mathbf{S}$,
$\Lambda=\cap_{n\in \Bbb{Z}}\mathbf{T}^{-n}(\mathbf{S})$, is a "horseshoe"-type basic set with Markov partition
$\mathcal{P}=\{\mathbf{S}_0, \mathbf{S}_1\}$. The map $\mathbf{T}:\mathbf{S}_0\cup\mathbf{S}_1\to \mathbf{S}$
belongs to class $\mathcal{F}$ (see definition \ref{classf}).

\begin{figure}
  \includegraphics[width=0.5\textwidth]{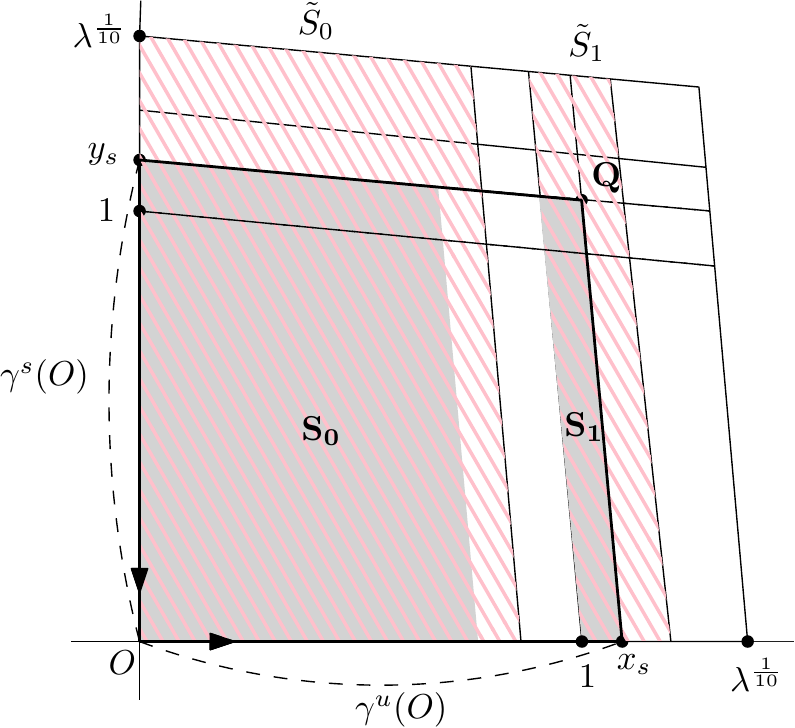}\\
  \caption{Rectangles $\tilde{S}_0$, $\tilde{S}_1$ and $\bf{S}_0$, $\bf{S}_1$. }\label{fig.p20}
\end{figure}

Consider now the Markov partitions
$$\mathcal{P}^s=\{[0, \lambda^{-1}x_s], [1, x_s]\}\ \ \ \ \text{\rm  and}\  \ \ \
\mathcal{P}^u=\{[0, \lambda^{-1}y_u], [1, y_u]\}$$ of the Cantor sets $K^s\subset Ox$ and
$K^u\subset Oy$ associated with the hyperbolic set $\Lambda$. We have
$$
\tau_L(\mathcal{P}^s)=\frac{\lambda^{-1}x_s}{1-\lambda^{-1}x_s}, \ \ \
\tau_R(\mathcal{P}^s)=\frac{x_s-1}{1-\lambda^{-1}x_s},
$$
$$
\tau_L(\mathcal{P}^u)=\frac{\lambda^{-1}y_u}{1-\lambda^{-1}y_u}, \ \ \
\tau_R(\mathcal{P}^u)=\frac{y_u-1}{1-\lambda^{-1}y_u}.
$$
\blm\label{distforpartition} The following estimates hold for all
$h\in (0, h_0)$ if $h_0$ is small enough: $$ \frac{1}{2}h^{-1}\le
\tau_L(\mathcal{P}^s)\le 2h^{-1} , \ \ \
0.01C_0^{-4}h^{\nu}\le \tau_R(\mathcal{P}^s)\le 250C_0^{4}h^{\nu},$$
$$
\frac{1}{2}h^{-1}\le \tau_L(\mathcal{P}^u)\le 2h^{-1} , \
\ \ 0.01C_0^{-4}h^{\nu}\le \tau_R(\mathcal{P}^u)\le
250C_0^{4}h^{\nu}.
$$ \elm

\begin{proof}[Proof of Lemma \ref{distforpartition}.] We will prove only
estimates for the partition  $\mathcal{P}^s$ (for $\mathcal{P}^u$
everything is the same). Notice first that
$1-\lambda^{-1}x_s\le x_s(1-\lambda^{-1})$.
Therefore
$$
\tau_L(\mathcal{P}^s)=\frac{\lambda^{-1}x_s}{1-\lambda^{-1}x_s}\ge
\frac{\lambda^{-1}x_s}{x_s-\lambda^{-1}x_s}=\frac{\lambda^{-1}}{1-\lambda^{-1}}=\frac{1}{\lambda-1}=\frac{1}{e^h-1}\ge
\frac{1}{2}h^{-1},
$$
since $e^h-1\le 2h$ for small $h$.

On the other hand, since $x_s\in (1, \lambda^{\frac{1}{10}})$
$$
\tau_L(\mathcal{P}^s)=\frac{\lambda^{-1}x_s}{1-\lambda^{-1}x_s}\le
\frac{\lambda^{-1}\lambda^{\frac{1}{10}}}{1-\lambda^{-1}\lambda^{\frac{1}{10}}}=\frac{1}{\lambda^{\frac{9}{10}}-1}=
\frac{1}{e^{\frac{9}{10}h}-1}\le \frac{10}{9}h^{-1}<{2}h^{-1}
$$

Now let us estimate $\tau_R(\mathcal{P}^s)$. Denote by $I$ the
segment of $W_{loc}^u(O)$ between the points
$(1, 0)$ and $(x_s, 0)$ (i.e. the bottom
horizontal edge of $\mathbf{S}_1$), $|I|=x_s-1$. Due to Lemma
\ref{conecondition2}
$$
0.01 C_0^{-4}h^{-1-\nu}|I|\le |\mathbf{T}(I)|\le 25C_0^4h^{-1-\nu}|I|.
$$
Since $\frac{1}{2}\le |\mathbf{T}(I)|\le 2 $ (this
follows from the Cone condition again), we have
$$
\frac{1}{50}C_0^{-4}h^{1+\nu}\le |I|\le
200C_0^4h^{1+\nu}
$$
Hence
$$
\tau_R(\mathcal{P}^s)\ge
\frac{\frac{1}{50}C_0^{-4}h^{1+\nu}}{1-\lambda^{-1}x_s}\ge
\frac{\frac{1}{50}C_0^{-4}h^{1+\nu}}{1-e^{-h}}\ge
0.01C_0^{-4}h^{\nu}
$$
since $1-e^{-h}\le h$ for $h\ge 0$, and
$$
\tau_R(\mathcal{P}^s)\le
\frac{200C_0^4h^{1+\nu}}{1-\lambda^{-1}x_s}\le \frac{200C_0^4h^{1+\nu}}{1-\lambda^{-1}\lambda^{\frac{1}{10}}}\le
\frac{200C_0^4h^{1+\nu}}{1-\lambda^{-\frac{9}{10}}}=\frac{200C_0^4h^{1+\nu}}{1-e^{-\frac{9}{10}h}}\le
250 C_0^4h^{\nu}.
$$
\end{proof}

\blm\label{property5} If $h_0>0$ is small enough then for all $h\in
(0, h_0)$ we have
$$
\text{\rm dist} (\mathbf{S}_0, \mathbf{S}_1)\ge 0.1 h \ \ \ \
\text{\rm and}\ \ \ \ \  \text{\rm dist} (\mathbf{T}(\mathbf{S}_0),
\mathbf{T}(\mathbf{S}_1))\ge 0.1 h.
$$
\elm

\begin{proof}[Proof of Lemma \ref{property5}.]
Notice that the vertical boundaries of $\mathbf{S}_0$ and
$\mathbf{S}_1$ are tangent to the cone field $\{K^u\}$. Consider the
left vertical edge of $\mathbf{S}_1$ and the right vertical edge of
$\mathbf{S}_0$. Their lowest points are $(1,0)$ and $(\lambda^{-1}x_s, 0)$, and the distance between them is
equal to
$$
1-\lambda^{-1}x_s\ge
1-\lambda^{-1}\lambda^{\frac{1}{10}}=1-\lambda^{-\frac{9}{10}}=1-e^{-\frac{9}{10}h}\ge
 \frac{9}{20}h
$$
if $h\in (0, h_0)$ and $h_0$ is small enough. From the cone
condition we have that the difference between $x$-coordinates of any
two points on those edges is greater than
$$
\frac{9}{20}h-2\cdot 100C_0^6h^{1+\nu}\ge
\frac{9}{20}h\left(1-\frac{4000}{9}C_0^6h^{\nu}\right)\ge
\frac{9}{40}h>0.1h
$$
if $h$ is small enough.

The proof of the second inequality is completely
similar.
\end{proof}

\subsection{Estimates of derivatives: verification of the conditions of Distortion Theorem}

We proved that the map $\mathbf{T}:\mathbf{S}_0\cup \mathbf{S}_1\to \mathbf{S}$ has an invariant locally maximal
hyperbolic set $\Lambda$ which is a two-component Smale horseshoe (i.e. $\mathbf{T}$ belongs to the class
$\mathcal{F}$) and obtained estimates of the lateral thicknesses of the corresponding Markov partitions. In
order to get estimates of the lateral thicknesses of the related Cantor sets we need to estimate the distortion
of the corresponding mappings.

Denote the differential of the map $\mathbf{T}:\mathbf{S}_0\cup \mathbf{S}_1\to \mathbf{S}$ by
 $D\mathbf{T}=\begin{pmatrix}
  a & b \\
   c & d \\
    \end{pmatrix}$, where $a, b, c$ and $d$ are smooth functions over $\mathbf{S}_0\cup \mathbf{S}_1$. Then the
    differential of the inverse map $\mathbf{T}^{-1}:\mathbf{T}(\mathbf{S}_0)\cup \mathbf{T}(\mathbf{S}_1)
    \to \mathbf{S}$ has the form $D\mathbf{T}^{-1}=\begin{pmatrix}
 \tilde{d} & -\tilde{b} \\
   -\tilde{c} & \tilde{a} \\
    \end{pmatrix}$, where $\tilde{a}=a\circ \mathbf{T}^{-1}$, $\tilde{b}=b\circ \mathbf{T}^{-1}$,
    $\tilde{c}=c\circ \mathbf{T}^{-1}$ and $\tilde{d}=d\circ
    \mathbf{T}^{-1}$. Notice that this notation agrees with the
    notation of Definition \ref{classofmaps}.

 \blm\label{estimates of derivatives s0} Consider the restriction of the map $\mathbf{T}$ to the
 rectangle $\mathbf{S}_0$.
There exists a constant $C_1>1$ (independent of $h$) such that

(1) $\lambda(1-C_1h\lambda^{-2n})\le |a|\le \lambda(1+C_1h\lambda^{-2n})$,

(2) $|d|\le \lambda^{-1}(1+C_1h\lambda^{-2n})$,

(3) $|b|, |c|\le C_1h\lambda^{-2n}$,

(4) $\left|\frac{\partial a}{\partial y}\right|, \left|\frac{\partial a}{\partial x}\right|,  \left|\frac{\partial b}{\partial x}\right|,
\left|\frac{\partial b}{\partial y}\right|, \left|\frac{\partial
c}{\partial x}\right|, \left|\frac{\partial c}{\partial y}\right|,\left|\frac{\partial d}{\partial y}\right|, \left|\frac{\partial d}{\partial x}\right|
\le C_1h\lambda^{-2n}$,

(5) $\left|\frac{\partial \tilde a}{\partial y}\right|, \left|\frac{\partial \tilde a}{\partial x}\right|,  \left|\frac{\partial \tilde b}{\partial x}\right|,
\left|\frac{\partial \tilde b}{\partial y}\right|, \left|\frac{\partial
\tilde c}{\partial x}\right|, \left|\frac{\partial \tilde c}{\partial y}\right|,\left|\frac{\partial \tilde d}{\partial y}\right|, \left|\frac{\partial \tilde d}{\partial x}\right|
\le C_1h\lambda^{-2n}$,
 \elm
 \begin{proof}[Proof of Lemma \ref{estimates of derivatives s0}]
Notice that Lemma \ref{estimates of derivatives s0} is a stronger version of Lemma \ref{l.Dt}.  Since
 $$
\mathbf{T}|_{\mathbf{S}_0}(x,y)=\rho\circ N_h\circ \rho^{-1}(x,y)=(\Delta_h(t(xy))x, \Delta^{-1}_h(t(xy))y),
$$
we have
\beq
\begin{aligned}
& a(x,y)= \Delta_h(t(xy))+xy\Delta_h'(t(xy))t'(xy)             \\
& b(x,y)= x^2\Delta_h'(t(xy))t'(xy)             \\
& c(x,y)=    -\frac{y^2}{\Delta_h^2(t(xy))}\Delta_h'(t(xy))t'(xy)          \\
& d(x,y)=     \Delta_h^{-1}(t(xy))-xyt'(xy)\frac{\Delta_h'(t(xy))}{\Delta_h^2(t(xy))}
\end{aligned}
\eneq
 Now the required estimated follow by a direct calculation from Lemmas \ref{analogduarte} and \ref{l.t}.
 \end{proof}

 \blm\label{variations0} The variation of $\log |a(x,y)|$ in
$\mathbf{S}_0$ is less than $4C_1h\lambda^{-2n}$. \elm
\begin{proof}[Proof of Lemma \ref{variations0}]
This follows immediately from the estimate (1) in Lemma \ref{estimates of derivatives s0}.
 \end{proof}

 \blm\label{estimates of derivatives} Consider the restriction of the map $\mathbf{T}$ to the
 rectangle $\mathbf{S}_1$.
There exists a constant $C_1>1$ (independent of $h$) such that

(1) $|d|\le \lambda^{-2n}C_1$, $|b|, |c|\le C_1$,

(2) $C_1^{-1}h^{-1-\nu}\le |a|\le C_1h^{-1-\nu}$,

(3) $\left|\frac{\partial b}{\partial x}\right|,
\left|\frac{\partial b}{\partial y}\right|, \left|\frac{\partial
c}{\partial x}\right|, \left|\frac{\partial c}{\partial y}\right|,
\left|\frac{\partial a}{\partial y}\right|\le C_1$,

(4) $\left|\frac{\partial d}{\partial x}\right|,\left|\frac{\partial d}{\partial y}\right|\le \lambda^{-2n}C_1,
$

(5) $\left|\frac{\partial a}{\partial x}\right|\le C_1h^{-2-\nu}, $

(6) $\left|\frac{\partial \tilde{b}}{\partial x}\right|,
\left|\frac{\partial \tilde{b}}{\partial y}\right|,
\left|\frac{\partial \tilde{c}}{\partial x}\right|,
\left|\frac{\partial \tilde{c}}{\partial y}\right|,
\left|\frac{\partial \tilde{a}}{\partial x}\right|\le C_1$,

(7) $\left|\frac{\partial \tilde{d}}{\partial y}\right|,\left|\frac{\partial \tilde{d}}{\partial
x}\right|\le
\lambda^{-2n}C_1, $

(8) $\left|\frac{\partial \tilde{a}}{\partial y}\right|\le
C_1h^{-2-\nu}. $
 \elm
\begin{proof}[Proof
of Lemma \ref{estimates of derivatives}.]
We need to study the differential of the map
$$
\mathbf{T}|_{\mathbf{S}_1}=\widehat{G}\circ C_h\circ \Psi_h\circ H^{k(h)}\circ  \Psi_h^{-1}\circ C_h^{-1}\circ G, \ \text{ where} \ \ G:\mathbf{S}_1\to R^u, \ \widehat{G}: R^s \to\mathbf{S}_1.
$$
In order to distinguish the points from $R^u$ and from $R^s$ let us
denote the coordinates in $R^u$ by $(\mathbf{x}, \mathbf{y})$ and
the coordinates in $R^s$ by $(\mathbf{X}, \mathbf{Y})$. Then the map
$C_h\circ \Psi_h\circ H^{k(h)}\circ \Psi_h^{-1}\circ C_h^{-1}:R^u\to
U$ can be represented as a composition
$$
(\mathbf{x}, \mathbf{y})\mapsto (t(\mathbf{x}, \mathbf{y}),
E(\mathbf{x}, \mathbf{y}))\mapsto (\mathbf{X}(t,E),
\mathbf{Y}(t,E)), \ \ \ (t,E)\in \Pi_{r_0, E_0},
$$
where $$(t(\mathbf{x}, \mathbf{y}), E(\mathbf{x},
\mathbf{y}))=H^{k(h)}\circ \Psi_h^{-1}\circ C_h^{-1}(\mathbf{x},
\mathbf{y})\ \ \  \text{\rm  and}\ \ \  (\mathbf{X}(t,E),
\mathbf{Y}(t,E))=C_h\circ \Psi_h(t,E).$$

Let us denote
$$
\begin{pmatrix}
  a_0 & b_0 \\
  c_0 & d_0 \\
\end{pmatrix}=D(C_h\circ \Psi_h\circ H^{h(h)}\circ
\Psi^{-1}_h\circ C_h^{-1}),
$$
then $$D\mathbf{T}|_{\mathbf{S}_1}(x,y)=\begin{pmatrix}
  a(x,y) & b(x,y) \\
  c(x,y) & d(x,y) \\
\end{pmatrix}=D\widehat{G}(\mathbf{X},\mathbf{Y})\begin{pmatrix}
  a_0(\mathbf{x}, \mathbf{y}) & b_0(\mathbf{x}, \mathbf{y}) \\
  c_0(\mathbf{x}, \mathbf{y}) & d_0(\mathbf{x}, \mathbf{y}) \\
\end{pmatrix}D{G}(x,y), 
$$
where
$$
DG(x,y)=\begin{pmatrix}
  1 & 0 \\
  \frac{2ny^2\Delta_h'(t(xy))t'(xy)}{\Delta_h^{2n+1}(t(xy))} & \Delta_h^{-2n}(t)-\frac{2nxy\Delta'_h(t(xy))t'(xy)}{\Delta_h^{2n+1}(t(xy))} \\
\end{pmatrix},$$
$$ D\widehat{G}(\mathbf{X},\mathbf{Y})=\begin{pmatrix}
           \Delta_h^{2n}(\mathbf{X}\mathbf{Y})+2n\mathbf{X}\mathbf{Y}\Delta_h^{2n-1}(\mathbf{X}\mathbf{Y})\Delta_h'(\mathbf{X}\mathbf{Y}) & 2n\mathbf{X}^2\Delta_h^{2n-1}(\mathbf{X}\mathbf{Y})\Delta_h'(\mathbf{X}\mathbf{Y}) \\
           0 & 1 \\
         \end{pmatrix}.
$$
Therefore
\begin{align*}
    a(x,y)=& (\Delta_h^{2n}(\mathbf{X}\mathbf{Y})+2n\mathbf{X}\mathbf{Y}\Delta_h^{2n-1}  (\mathbf{X}\mathbf{Y})\Delta_h'(\mathbf{X}\mathbf{Y}))  \left(a_0(\mathbf{x}, \mathbf{y})+\frac{2b_0(\mathbf{x}, \mathbf{y})ny^2\Delta_h'(t(xy))t'(xy)}{\Delta_h^{2n+1}(t(xy))}\right)+ \\
& \qquad \quad \qquad +  2n\mathbf{X}^2\Delta_h^{2n-1}  (\mathbf{X}\mathbf{Y})\Delta_h'(\mathbf{X}\mathbf{Y})  \left(c_0(\mathbf{x}, \mathbf{y})+\frac{2d_0(\mathbf{x}, \mathbf{y})ny^2\Delta_h'(t(xy))t'(xy)}{\Delta_h^{2n+1}(t(xy))}\right), \\
    b(x,y)=& (\Delta_h^{2n}(\mathbf{X}\mathbf{Y})+  2n  \mathbf{X}\mathbf{Y}\Delta_h^{2n-1}  (\mathbf{X}\mathbf{Y}) \Delta_h'(\mathbf{X}\mathbf{Y}))  \left(\Delta_h^{-2n}(t(xy))b_0(\mathbf{x}, \mathbf{y}) -\frac{2b_0(\mathbf{x}, \mathbf{y})nxy\Delta'_h(t(xy))t'(xy)}{\Delta_h^{2n+1}(t(xy))}\right) \\
& \qquad \quad \qquad +  2n\mathbf{X}^2\Delta_h^{2n-1}  (\mathbf{X}\mathbf{Y})\Delta_h'(\mathbf{X}\mathbf{Y})  \left(\Delta_h^{-2n}(t(xy))d_0(\mathbf{x}, \mathbf{y})-\frac{2d_0(\mathbf{x}, \mathbf{y})nxy\Delta'_h(t(xy))t'(xy)}{\Delta_h^{2n+1}(t(xy))}\right),\\
c(x,y)=& c_0(\mathbf{x}, \mathbf{y})+ \frac{2d_0(\mathbf{x}, \mathbf{y})ny^2\Delta_h'(t(xy))t'(xy)}{\Delta_h^{2n+1}(t(xy))},\\
d(x,y)=& \Delta_h^{-2n}(t(xy))d_0(\mathbf{x}, \mathbf{y})- \frac{2d_0(\mathbf{x}, \mathbf{y})nxy\Delta'_h(t(xy))t'(xy)}{\Delta_h^{2n+1}(t(xy))}.
\end{align*}

The $C^3$-norm of the map $C_h\circ \Psi_h\circ H^{k(h)}\circ
\Psi^{-1}_h\circ C_h^{-1}$ (as well as of its inverse) is uniformly
bounded by some constant independent of $h$. Together with Lemmas \ref{analogduarte} and \ref{l.t} this implies that
 $d(x,y)=O(\lambda^{-2n})$, $b(x,y)=O(1), c(x,y)=O(1)$, and this proves the statement (1). Inequality (2) follows directly from Lemma \ref{conecondition2} for large enough $C_1$, for example, $C_1> 100C_0^4$. Using the estimates
 $$
 \mathbf{X},\mathbf{y},\frac{\partial\mathbf{X}}{\partial y}, \frac{\partial\mathbf{Y}}{\partial y}=O(\lambda^{-2n}),\  \frac{\partial(\mathbf{XY})}{\partial x}, \frac{\partial\mathbf{X}}{\partial x}=O(1), \ \frac{\partial(\mathbf{XY})}{\partial y}=O(\lambda^{-2n}),
 $$
 $$
 \frac{\partial\mathbf{x}}{\partial x}=1,\ \frac{\partial\mathbf{x}}{\partial y}=0,\ \frac{\partial\mathbf{y}}{\partial x}=O(\lambda^{-4n}),\ \frac{\partial\mathbf{y}}{\partial y}=O(\lambda^{-2n}),\ n=O(h^{-2}),
 $$
 one can show that partial derivatives of $b(x,y)$ and $c(x,y)$ are bounded, and that $\frac{\partial d}{\partial x},\frac{\partial d}{\partial y}$ are of order $O(\lambda^{-2n})$ (we omit the details of these tedious but straightforward calculations).

   In order to study the partial derivatives of $a(x,y)$, let us first obtain some estimates for $a_0(\mathbf{x},\mathbf{y})$.
    We have
$$
a_0(\mathbf{x}, \mathbf{y})=\frac{\partial}{\partial
\mathbf{x}}\mathbf{X}(t(\mathbf{x}, \mathbf{y}), E(\mathbf{x},
\mathbf{y}))\ \ \ \text{\rm and }\ \ \
\frac{\partial a_0}{\partial \mathbf{x}}=\frac{\partial^2}{\partial
\mathbf{x}^2}\mathbf{X}(t(\mathbf{x}, \mathbf{y}), E(\mathbf{x},
\mathbf{y})).
$$
In particular, since $E(\mathbf{x}, 0)=0$,
\begin{multline}\label{partialazero}
    \frac{\partial a_0}{\partial \mathbf{x}}(\mathbf{x}, 0)=\frac{d^2}{d
\mathbf{x}^2}\mathbf{X}(t(\mathbf{x}, 0),
0)=\frac{d}{d\mathbf{x}}\left(\frac{\partial \mathbf{X}}{\partial
t}(t(\mathbf{x}, 0), 0)\cdot \frac{\partial t}{\partial \mathbf{x}}(\mathbf{x},0)\right)=\\
=\frac{\partial^2\mathbf{X}}{\partial t^2}(t(\mathbf{x}, 0), 0)\cdot
\left(\frac{\partial t}{\partial
\mathbf{x}}(\mathbf{x},0)\right)^2+\frac{\partial
\mathbf{X}}{\partial t}(t(\mathbf{x}, 0), 0)\cdot \frac{\partial^2
t}{\partial \mathbf{x}^2}(\mathbf{x}, 0)=\\
=\frac{\partial^2\mathbf{X}}{\partial t^2}(t(\mathbf{x}, 0), 0)\cdot
\left(\frac{\partial t}{\partial
\mathbf{x}}(\mathbf{x},0)\right)^2+a_0(\mathbf{x}, 0)\cdot
\left(\frac{\partial t}{\partial
\mathbf{x}}(\mathbf{x},0)\right)^{-1}\cdot \frac{\partial^2
t}{\partial \mathbf{x}^2}(\mathbf{x}, 0)
\end{multline}
From the Cone condition (more precisely, from Steps 3-5 of the proof
of Lemma \ref{conecondition2}) we know that $a_0(\mathbf{x},0)=O(\mu(h))$. Also since $C^3$-norms of maps $C_h\circ \Psi_h$ and
$H^{k(h)}\circ \Psi_h^{-1}\circ C_h^{-1}$ are bounded by $C_0$, we
have
$$
\left|\frac{\partial a_0}{\partial \mathbf{x}}(\mathbf{x}, 0)\right|\le O\left(\frac{\partial^2\mathbf{X}}{\partial t^2}(t(\mathbf{x}, 0),
0)\right) + O(\mu(h)).
$$
Now we need to estimate $\left|\frac{\partial^2\mathbf{X}}{\partial t^2}(t(\mathbf{x}, 0), 0)\right|$. Notice
that the image of the $Oy$ ax under the map $C_h\circ \Psi_h$ is a graph of the function $E=\Theta(t)$, and
therefore $\mathbf{X}(t, \Theta(t))=0$. This implies that
$$
\frac{d}{dt}(\mathbf{X}(t, \Theta (t))=0=\frac{\partial \mathbf{X}}{\partial t}(t, \Theta
(t))+\frac{\partial \mathbf{X}}{\partial E}(t, \Theta (t))\cdot {\dot{\Theta}(t)}
$$
and
\begin{multline}\label{secondderX}
    \frac{d^2}{dt^2}(\mathbf{X}(t, \Theta (t))=0=\\
    =\frac{\partial^2
\mathbf{X}}{\partial t^2}(t, \Theta (t))+2\frac{\partial^2\mathbf{X}}{\partial t\partial E}(t, \Theta (t))
{\dot{\Theta}(t)}+\frac{\partial^2 \mathbf{X}}{\partial E^2}(t, \Theta (t)) ({\dot{\Theta}(t)})^2+
\frac{\partial \mathbf{X}}{\partial E}(t, \Theta (t)) {\ddot{\Theta}(t)}
\end{multline}
Since for $\Theta(t)$,  $\dot\Theta(t)$ and $\ddot\Theta(t)$ we have asymptotics (\ref{thetastar}) (see
Theorem \ref{t.Gelfreich} and Remark \ref{remarkaboutmu}), we get
$$
\left|\frac{\partial^2 \mathbf{X}}{\partial t^2}(t, \Theta (t))\right|\le 2C_0\cdot
2\mu(h)+C_0(2\mu(h))^2+C_0\cdot 4\pi h^{-1}\mu(h) = O(h^{-1}\mu(h))
$$
if $h$ is small enough.

At the same time by the mean value theorem we have
$$
\left|\frac{\partial^2 \mathbf{X}}{\partial t^2}(t, 0)\right|\le \left|\frac{\partial^2 \mathbf{X}}{\partial
t^2}(t, \Theta (t))\right|+C_0|\Theta(t)|<20C_0h^{-1}\mu(h)+C_0\frac{1}{\pi}h\mu(h)= O(h^{-1}\mu(h))
$$
Finally we have
$$
\frac{\partial a_0}{\partial \mathbf{x}}(\mathbf{x}, 0)= O(h^{-1}\mu(h)) + O(\mu(h))= O(h^{-1}\mu(h)),
$$
and since $\mathbf{y}=O(\lambda^{-2n})=O(h^{1+\nu}\mu(h))$, by the mean value theorem we also have $
\frac{\partial a_0}{\partial \mathbf{x}}(\mathbf{x}, \mathbf{y})= O(h^{-1}\mu(h)).
$
Now we have
$$
\frac{\partial a}{\partial y}(x,y)=a_0(\mathbf{x}, \mathbf{y})2n\Delta_h^{2n-1}(\mathbf{X}\mathbf{Y})\Delta_h'(\mathbf{X}\mathbf{Y})\frac{\partial (\mathbf{X}\mathbf{Y})}{\partial y}+\frac{\partial a_0}{\partial y}(\mathbf{x}, \mathbf{y})\Delta_h^{2n}(\mathbf{X}\mathbf{Y})+O(nh\lambda^{-2n}),
$$
and since $a_0(\mathbf{x}, \mathbf{y})=O(\mu(h))$ and $\frac{\partial a_0}{\partial y}(\mathbf{x}, \mathbf{y})=O(\lambda^{-2n}),$ we also have
$$
\frac{\partial a}{\partial y}(x,y)=O(n\mu(h)\lambda^{2n}h\lambda^{-2n})+O(1)+O(nh\lambda^{-2n})=O(1).
$$
Let us now estimate $\frac{\partial a}{\partial x}(x,y)$. We have
\begin{multline*}
\frac{\partial a}{\partial x}(x,y)=\frac{\partial a_0}{\partial x}(\mathbf{x}, \mathbf{y})(\Delta_h^{2n}(\mathbf{X}\mathbf{Y})+2n\mathbf{X}\mathbf{Y}\Delta_h^{2n-1}  (\mathbf{X}\mathbf{Y})\Delta_h'(\mathbf{X}\mathbf{Y})) +\\ +a_0(\mathbf{x}, \mathbf{y})2n\frac{\partial (\mathbf{X}\mathbf{Y})}{\partial x}\Delta_h^{2n-1}(\mathbf{X}\mathbf{Y})\Delta_h'(\mathbf{X}\mathbf{Y})+O(nh).
\end{multline*}
Therefore
\begin{multline*}
\frac{\partial a}{\partial x}(x,y)=O(h^{-1}\mu(h))\left(O(\lambda^{2n})+O(n\lambda^{-2n}\lambda^{2n}h)\right)+O(\mu(h)n\lambda^{2n}h)+O(nh)=\\
=O(h^{-1}h^{-1-\nu})+O(h^{-1-\nu}nh)+O(nh)=O(h^{-2-\nu}).
\end{multline*}
This proves statements (3), (4), and (5). Estimates (6), (7), and (8) are symmetric to the estimates (3), (4), and (5).
\end{proof}

\blm\label{variation} The variation of $\log |a(x,y)|$ in
$\mathbf{S}_1$ is less than $600C_1^2C_0^6h^{\nu}$. \elm
\begin{proof}[Proof
of Lemma \ref{variation}.] Take two points $(x_1, y_1)$ and $(x_2,
y_2)$ from $\mathbf{S}_1$.  We want to estimate $|\log a(x_1,
y_1)-\log a(x_2, y_2)|$ by using the mean value theorem. Generally
speaking, the set $\mathbf{S}_1$ is not convex, so we need some
preparations to apply it.

 Let $\tilde{\gamma}$ be the intersection $\tilde{\gamma}=\mathbf{T}(\mathbf{S}_1)\cap \{x=\frac{1}{2}\}$.
 Then $\hat{\gamma}=\mathbf{T}^{-1}(\tilde{\gamma})$ is a smooth
 curve tangent to the cone field $\{K^s\}$, $\hat{\gamma}\subset
 \mathbf{S}_1$.
Denote $\hat{x}_1=\{y=y_1\}\cap \hat{\gamma}$ and
$\hat{x}_2=\{y=y_2\}\cap \hat{\gamma}$. Notice that the whole
interval with the end points $(x_1, y_1)$ and $(\hat{x}_1, y_1)$
belongs to $\mathbf{S}_1$, as well as the interval with end points
$(x_2, y_2)$ and $(\hat{x}_2, y_2)$. Now we have
\begin{multline}
  |\log a(x_1, y_1)-\log a(x_2, y_2)|\le \\
  \le |\log a(x_1, y_1)-\log a(\hat{x}_1, y_1)|+|\log a(\hat{x}_1, y_1)-\log a(\hat{x}_2,
  y_2)|+|\log a(\hat{x}_2, y_2)-\log a(x_2, y_2)|
\end{multline}
Due to the Cone condition the width of $\mathbf{S}_1$ is not greater
than $200 C_0^4h^{1+\nu}$. By the mean value theorem we have
\begin{multline}\label{est}
    |\log a(x_1, y_1)-\log a(\hat{x}_1, y_1)|\le \\
    \le  \frac{1}{|a(x_1^*,
y_1)|}\left|\frac{\partial a}{\partial x}(x_1^*,
y_1)\right||x_1-\hat{x}_1|\le C_1h^{1+\nu}\cdot C_1h^{-2-\nu}\cdot
200C_0^4h^{1+\nu}=200C_1^2C_0^4h^{\nu}
\end{multline}
Similarly
$$
|\log a(\hat{x}_2, y_2)-\log a({x}_2, y_2)|\le 200 C_1^2C_0^4h^{\nu}
$$
Now parameterize the curve $\hat{\gamma}$ by the parameter $y$,
$\hat{\gamma}=\hat{\gamma}(x(y), y), y\in [y_1, y_2]$ (or $y\in
[y_2, y_1]$ if $y_2<y_1$). Consider a function $g(y)=\log
a(\hat{\gamma}(x(y), y)$. Since $\hat{\gamma}$ is tangent to the
cone field $\{K^s\}$, for some $y^*\in [y_1, y_2]$ we have
\begin{multline}\label{est1}
    |g(y_1)-g(y_2)|=|g'(y^*)|\cdot |y_1-y_2|=\frac{1}{|a(\hat{\gamma}(x(y^*),
    y^*)|}\cdot \left|\frac{\partial a}{\partial x}\hat{\gamma}'_x+\frac{\partial a}{\partial
    y}\hat{\gamma}'_y\right|\cdot |y_1-y_2|\le \\
    \le C_1h^{1+\nu}\left(\left|\frac{\partial a}{\partial x}\right||\hat{\gamma}'_x|+
    \left|\frac{\partial a}{\partial
    y}\right||\hat{\gamma}'_y|\right)\le
    C_1h^{1+\nu}(C_1h^{-2-\nu}\cdot 100C_0^6h^{1+\nu}+C_1)\le\\
    \le
    C_1h^{\nu}(100C_1C_0^6+C_1h)\le 200 C_1^2C_0^6h^{\nu}
\end{multline}
Finally we have
$$
|\log a(x_1, y_1)-\log a(x_2, y_2)|\le 400 C_1^2C_0^4h^{\nu}+200
C_1^2C_0^6h^{\nu}<600C_1^2C_0^6h^{\nu}
$$
\end{proof}

The following Proposition directly follows from Lemmas
\ref{conecondition1}, \ref{conecondition2}, \ref{property5}, \ref{estimates of derivatives s0}, \ref{variations0},
\ref{estimates of derivatives},    and \ref{variation}.
\bprop\label{tbelongstoclassf} The map $\mathbf{T}:\mathbf{S}_0\cup
\mathbf{S}_1\to \mathbf{S}$ belongs to the class $\mathcal{F}(C^*,
\gamma, \varepsilon)$ (see definition \ref{classofmaps}), where
$C^*=120C_1^4C_0^6$, $\gamma=1200C_1^3C_0^6h^{\nu}$ and
$\varepsilon=120C_1^3C_0^6h^{1+\nu}$. \enprop

\subsection{Final step}

\begin{proof}[Proof of Theorem \ref{t.henonfamily}.] Properties 1. and 2. of Theorem \ref{t.henonfamily} clearly follow
from the construction and the Cone condition. Let us combine now
Proposition \ref{tbelongstoclassf} with Duarte Distortion Theorem
(Theorem \ref{distortiontheorem}) and Lemma \ref{distforpartition}.
Let us assume that $h$ is small enough so that $e^{D(C^*,
\varepsilon, \gamma)}<2$. Then we have
$$
\frac{1}{4}h^{-1}\le \tau_L(K^s)\le 4h^{-1}, \ \ \
\frac{1}{200}C_0^{-4}h^{\nu}\le \tau_R(K^s)\le 500 C_0^4h^{\nu},
$$
$$
\frac{1}{4}h^{-1}\le \tau_L(K^u)\le 4h^{-1}, \ \ \
\frac{1}{200}C_0^{-4}h^{\nu}\le \tau_R(K^u)\le 500 C_0^4h^{\nu}.
$$
Therefore $$\tau_L(K^s)\tau_R(K^s)\ge
\frac{1}{800}C_0^{-4}h^{-1+\nu}\to \infty \ \ \ \text{\rm  as}  \ \
\ h\to 0 \ \ \ \text{\rm  (i.e. $a\to -1$}).$$ Similarly
$\tau_L(K^u)\tau_R(K^u)\to \infty$ as $a\to -1$, so
$\tau_{LR}(\Lambda)\to \infty$ as $a\to -1$. Notice that this implies that Lemma \ref{l.leftright} can be applied to the Cantor sets $K^s$ and $K^u$, and this is how Duarte proved existence of the conservative Newhouse phenomena in \cite{Du1, Du2}.

To check the property 4., we apply Proposition \ref{logestimate}
(notice that we are exactly in the setting of Remark \ref{remark5}).
We have
\begin{multline}
  \text{dim}_HK^s\ge \frac{\log\left(1+\frac{\tau_L(K^s)}{1+\tau_R(K^s)}\right)}{\log\left(1+\frac{1+\tau_L(K^s)}{\tau_R(K^s)}\right)}
  \ge  \frac{\log\left(1+\frac{\frac{1}{4}h^{-1}}{1+500 C_0^4h^{\nu}}\right)}
  {\log\left(1+\frac{1+4h^{-1}}{\frac{1}{200}C_0^{-4}h^{\nu}}\right)}=\\
  =
  \frac{\log \left[h^{-1}\left(h+\frac{1}{4+2000C_0^4h^{\nu}}\right)\right]}
  {\log
  \left[h^{-1-\nu}\left(h^{1+\nu}+200C_0^4(4+h)\right)\right]}=\frac{1-O((\log h)^{-1})}{1+\nu-O((\log
  h)^{-1})}>\frac{1}{1+2\nu}
\end{multline}
if $h$ is small enough. Since $\nu$ could be chosen arbitrary small,
$\text{dim}_HK^s\to 1$ as $h\to 0$ (i.e. $a\to -1$). Similarly
$\text{dim}_HK^u\to 1$ as  $a\to -1$. Since
$\text{dim}_H{\Lambda}=\text{dim}_HK^s+\text{dim}_HK^u$ (\cite{MM},
see also \cite{PV}), we have
$$
\text{dim}_H{\Lambda}\to 2\ \ \text{\rm as $a\to -1$}.
$$
Theorem \ref{t.henonfamily} is proved.
\end{proof}

\section{Conservative homoclinic bifurcations and hyperbolic sets of large Hausdorff dimension: the proof}\label{s.consproof}

Here we derive Theorem \ref{t.maintheorem} from Theorem \ref{t.henonfamily}.
\begin{proof}[Proof of Theorem \ref{t.maintheorem}.] First of all, Theorem B from \cite{Du4} claims that a generic unforlding of a conservative homoclinic tangency leads to appearance of $C^2$-stably-wild hyperbolic basic set. More precisely, there exists an open set $\mathcal{U}_0\subset \mathbb{R}^1, 0\in \overline{\mathcal{U}_0},$ such that
\begin{itemize}
\item each map $f_{\mu}, \mu\in \mathcal{U}_0$, has a basic set $\Lambda_{\mu}^*$ exhibiting homoclinic tangencies;


\item $P_{\mu}\in \Lambda_{\mu}^*$, where $P_{\mu}$ is a continuation of the saddle $P_0$;

\item there exists a dense subset $D_0\subset \mathcal{U}_0$ such that for each $\mu\in D_0$ the saddle $P_{\mu}$ has a quadratic homolcinic tangency which unfolds generically with $\mu$.
\end{itemize}

Choose a sequence of parameter values $\{\mu_n\}_{n\in \mathbb{N}}\subset D_0$ dense in $\mathcal{U}_0$. Fix any $a\in \mathbb{R}$. The renormalization technics by Mora-Romero \cite{MR1} prove that an appropriately chosen and rescaled map near
a homoclinic tangency is $C^r$--close to a Henon map $H_a$. Namely, the following statement holds.

\bthm[\cite{MR1}, based on \cite{AS, gs3}]
Let $\{f_{\nu}\}\subset \text{\rm Diff}^{\infty}(M^2, \omega)$ be a smooth family of area preserving maps
unfolding generically a quadratic homoclinic tangency at the point $Q_0\in M$ and parameter $\nu=0$. Then there are, for all large enough $n\in \mathbb{N}$, reparametrizations $\nu=\nu_n(a)$ of the parameter variable $\nu$ and $a$-dependent coordinates
$$
(x,y)\mapsto \Psi_{n,a}(x,y)\in M^2
$$
such that

(1) for each compact $K$, in the $(a,x,y)$-space, the images of $K$ under the maps
$$
(a,x,y)\mapsto (\nu_n(a), \Psi_{n,a}(x,y))
$$
converge to $(0, Q_0)\in \mathbb{R}\times M^2$, as $n\to \infty$.

(2) the domains of the maps
$$
(a, x, y)\mapsto (a, \Psi^{-1}_{n,a}\circ f^n_{\nu_n(a)}\circ \Psi_{n,a}(x,y))
$$
converge to $\mathbb{R}^3$ as $n\to \infty$ and the maps convege in the $C^{\infty}$ topology to the conservative Henon map
$$
(a, x, y)\mapsto (a, y, -x+a-y^2).
$$
\ethm

 By Theorem \ref{t.henonfamily} for $a$ slightly
larger than $-1$ the map $H_a$ has an invariant hyperbolic set $\Lambda_a$ of Hausdorff dimension close to $2$ with persistent
hyperbolic tangencies. By continuous dependence of Hausdorff
dimension of an invariant hyperbolic set on a diffeomorphism
\cite{MM,PV} near each $\mu_n$ there is an open interval of
parameters $U_n\subset \mathcal{U}_0$ such that for $\mu\in U_n$ the map $f_\mu$ has an
invariant locally maximal transitive hyperbolic set $\Delta_{\mu}^*$ with Hausdorff
dimension greater than $2-\delta$. Set $\mathcal{U}=\cup_{n\in \mathbb{N}}U_n$ and $D=D_0\cap \mathcal{U}$. The hyperbolic saddle $P_\mu$ and the set $\Delta_{\mu}^*$ are homoclinically related, see Lemma 2 from \cite{Du1}. Therefore
for every $\mu\in U_n$ there exists a basic set $\Delta_{\mu}$ such
that $P_\mu\in \Delta_\mu$ and $\Lambda^*_\mu\cup\Delta^*_\mu\subset \Delta_\mu$.
Since $\Lambda_{\mu}^*$ has persistent homoclinic tangencies, so does
$\Delta_{\mu}$. Also, $\text{\rm dim}_H \Delta_{\mu}\ge \text{\rm
dim}_H \Delta^*_{\mu}>2-\delta$. By construction the parts (1) and (2) of Theorem \ref{t.maintheorem} are now satisfied.

Let us now observe how elliptic periodic points appear. Take any
$\mu\in \mathcal{U}$. If $Q_{\mu}$ is a transversal homoclinic point of the
saddle $P_{\mu}$ then in can be continued for some intervals of
parameters $I_Q\subseteq \mathcal{U}$. Assume that $I_Q\subseteq \mathcal{U}$ is a
maximal subinterval of $\mathcal{U}$ where such a continuation is possible.
All homoclinic points of $P_{\mu}$ for all values $\mu\in \mathcal{U}$
generate countable number of such subintervals $\{I_s\}_{s\in
\mathbb{N}}$ in $\mathcal{U}$.

From \cite{MR1} it follows that for each $I_s$ there exists a
residual set ${R}_s\subseteq  I_s$ of parameters such that for
$\mu\in R_s$ the corresponding homoclinic point $Q_{\mu}$ is an
accumulation point of elliptic periodic points of $f_{\mu}$. Denote
$\widetilde{{R}}_s=\left(\mathcal{U}\backslash \overline{I_s}\right)\cup
R_s$ -- residual subset of $\mathcal{U}$. Now set $\mathcal{R}_1=\cap_{s\in
\mathbb{N}}\widetilde{R}_s$ -- also a residual subset in $\mathcal{U}$. For
$\mu\in \mathcal{R}_1$ every transversal homoclinic point of the
saddle $P_{\mu}$ is an accumulation point of elliptic periodic
points of $f_{\mu}$, and  this proves (3.1).

Now let us see that for a residual set of parameters in $\mathcal{U}$ the homoclinic class of $P_{\mu}$ has full Hausdorff
dimension. In the same way as we constructed $\mathcal{U}$ starting with $\mathcal{U}_0$, from Theorem \ref{t.henonfamily} and \cite{MR1} it follows that for every
$m\in \Bbb{N}$ there exists an open and dense subset $\mathcal{A}_m\subset
\mathcal{U}$ such that for every $\mu\in \mathcal{A}_m$ there exists a hyperbolic set
$\Delta^m_\mu$ such that $\text{\rm dim}_H\Delta^m_\mu
> 2-\frac{1}{m}$. From Lemma 2 from \cite{Du1} it follows that
$P_\mu$ and $\Delta^m_\mu$ are homoclinically related. Therefore
there exists a basic set $\widetilde{\Delta}_\mu^m$ such that
$P_\mu\in \widetilde{\Delta}_\mu^m$ and $\Delta_\mu^m \subset
\widetilde{\Delta}_\mu^m$. In particular, for $\mu\in
\mathcal{R}_2=\cap_{m\ge 1} \mathcal{A}_m$ we have $\text{\rm dim}_HH(P_\mu,
f_\mu)=2.$ Set $\mathcal{R}=\mathcal{R}_1\cap \mathcal{R}_2$. This
proves (3.2).

The last property (3.3) follows from the following Lemma.
\blm\label{l.dimdense1} Let $\Lambda\subset M^2$ be a basic set of a surface
diffeomorphism. Then $$ \text{\rm dim}_H\{x\in \Lambda |\
\mathcal{O}^+(x) \text{\rm\ is dense in  } \Lambda \text{\rm\ and }
\mathcal{O}^-(x) \text{\rm\ is dense in  } \Lambda\}=\text{\rm
dim}_H\Lambda.
$$ \elm
Indeed, take any $\mu\in \mathcal{R}$, and consider the set $S_{\mu}=\{x\in H(P_{\mu}, f_{\mu})\ |\ P_{\mu}\in \omega(x)\cap \alpha(x)\}$. By construction of $\mathcal{R}$, for any $m\in \mathbb{N}$ the homoclinic class $H(P_{\mu}, f_{\mu})$ contains a hyperbolic set $\widetilde{\Delta}_\mu^m$ such that $P_\mu\in \widetilde{\Delta}_\mu^m$ and $\text{\rm dim}_H\widetilde{\Delta}^m_\mu
> 2-\frac{1}{m}$. Due to Lemma \ref{l.dimdense1}, the set of points in $\widetilde{\Delta}_\mu^m$ whose positive and negative semiorbits are both dense in $\widetilde{\Delta}_\mu^m$ has Hausdorff dimension greater than  $2-\frac{1}{m}$, and, hence, $\text{\rm dim}_H S_{\mu}\ > 2-\frac{1}{m}$. Since this is true for arbitrary large $m$, we have $\text{\rm dim}_H S_{\mu}= 2$.

 Lemma \ref{l.dimdense1} is a folklore, and can be seen as a corollary of results from \cite{Ma, MM}.

This completes the proof of Theorem \ref{t.maintheorem}.
\end{proof}

\section{On Hausdorff dimension of stochastic sea of the standard map: the proof}\label{s.standardproof}

In this section we derive the results on the Hausdorff dimension of stochastic sea of the standard map (Theorems \ref{t.1} and \ref{t.2}) from the result on hyperbolic sets of large Hausdorff dimension appearing after a conservative homoclinic bifurcation (Theorem \ref{t.maintheorem}).

 In the study of the standard family in the
current context Duarte \cite{Du3} proved the following important
results:

\noindent {\bf Theorem A (Duarte, \cite{Du3}).} {\it There is a
family of basic sets $\Lambda_k$ of $f_k$ such that:

1. $\Lambda_k$ is dynamically increasing, meaning for small
$\varepsilon >0$, $\Lambda_{k+\varepsilon}$ contains the
continuation of $\Lambda_k$ at parameter $k+\varepsilon$.


2. Hausdorff Dimension of $\Lambda_k$ increases up to 2. For large
$k$,
$$
\text{\rm dim}_H(\Lambda_k)\ge 2\frac{\log 2}{\log \left(2+\frac{9}{k^{1/3}}\right)}.
$$


3. $\Lambda_k$ fills in $\T^2\ni (x,y)$, meaning that as $k$ goes to
$\infty$ the maximum distance of any point in $\mathbb{T}^2$ to
$\Lambda_k$ tends to $0$. For large $k$, the set $\Lb_k$ is
$\dt_k$-dense on $\T^2$ for $\delta_k=\frac{4}{k^{1/3}}$. }

\noindent {\bf Theorem B (Duarte, \cite{Du3}).} {\it There exists
$k_0>0$ and a residual set $R\subseteq [k_0, \infty)$ such that for
$k\in R$ the closure of the $f_k$'s elliptic points contains
$\Lambda_k$. }

\noindent {\bf Theorem C (Duarte, \cite{Du3}).} {\it There exists
$k_0>0$ such that given any $k\ge k_0$ and any periodic point $P\in
\Lambda_k$, the set of parameters $k'\ge k$ at which the invariant
manifolds $W^s(P(k'))$\footnote{Recall that $P(k')$ denotes the continuation of the periodic saddle
$P$ at parameter $k'$.} and $W^u(P(k'))$ generically unfold a
quadratic tangency is dense in $[k, +\infty).$}

Theorem \ref{t.1} should be considered as an improvement of Theorems A and B.

\begin{proof}[Proof of Theorem \ref{t.1} and Theorem \ref{t.2}]
 We begin with the following technical statement. Denote by $\cN(N)=(n_1, \ldots ,n_N)$ an $N$-tuple with
$n_i\in \mathbb{N}$.
\bprop\label{p.conditions} There exists $k_0>0$ such that for each
$N\in \mathbb{N}$ there is a family of finite open intervals $
\mathcal{U}_{\cN(N)}\subseteq [k_0, +\infty) $ indexed by $N$-tuples
$\cN(N)=(n_1, \ldots ,n_N)$ satisfying the following properties:
\begin{description}
\item[\bf U1] For pair of tuples $\cN(N)\ne \cN'(N)$ intervals $\
\mathcal{U}_{\cN(N)}$ and $\mathcal{U}_{\cN'(N)}$ are disjoint.
\item[\bf U2] For  any tuple $\cN(N+1)=(\cN(N), n_{N+1})$ we have $
\mathcal{U}_{\cN(N+1)}\subseteq \mathcal{U}_{\cN(N)}. $
\item[\bf U3] The union $\cup_{n_1\in }\mathcal{U}_{n_1}$ is dense in $[k_0,
+\infty)$, and for each $N\in \mathbb{N}$ the union $\cup_{j\in
\mathbb{N}}\, \mathcal{U}_{(\cN(N),j)}$ is dense in
$\mathcal{U}_{\cN(N)}$.
\item[\bf U4]
Every diffeomorphism $f_k$, $k\in \mathcal{U}_{\cN(N)}$, has a
sequence of invariant basic sets
$$
\Lambda_k^{(n_1)}\subseteq \Lambda_k^{(n_1, n_2)}\subseteq \ldots
\subseteq \Lambda_k^{\cN(N)},
$$
and $\Lambda_k^{\cN(N)}$ depends continuously on $k\in
\mathcal{U}_{\cN(N)}$.
\item[\bf U5]
$\Lambda_k\subseteq \Lambda_k^{(n_1)}$ for each $n_1\in \mathbb{N}$
and  $k\in \mathcal{U}_{n_1}$, where $\Lambda_k$ is a hyperbolic set
from Theorem A.
\item[\bf U6]
$\text{\rm dim}_H\Lambda_k^{\cN(N)}>2-1/N$.
\item[\bf U7] For any point $x\in \Lambda_k^{\cN(N)}$ there exists an elliptic
periodic point $p_x$ of $f_k$ such that $\text{dist}(p_x, x)<1/N$.
\end{description}
\enprop
\begin{proof}[Proof of Proposition \ref{p.conditions}]
Notice that Theorem \ref{t.maintheorem} and Theorem C directly imply the following statement:

\blm\label{l.reduction} Given $k^*\in (k_0, +\infty)$,
$\varepsilon>0$ and $\delta>0$, there exists  a finite open interval
$V\subset (k^*-\varepsilon, k^*)$ such that for all $k\in V$ the map
$f_k$ has a basic set $\Lambda^*_k$ such that

1) $\Lambda^*_k$ depends continuously on $k\in V$;

2) $\Lambda^*_k\supseteq \Lambda_k,$ where $\Lambda_k$ is a basic set from Theorem A;

3) Hausdorff dimension $\text{\rm dim}_H\Lambda_k^{*}>2-\delta$,

4) For any point $x\in \Lambda_k^{*}$ there exists an elliptic
periodic point $p_x$ of $f_k$ such that $\text{dist}(p_x,
x)<\delta$. \elm

Proposition
\ref{p.conditions} can be reduced to Lemma \ref{l.reduction}.  Indeed, let us show how to construct the intervals
$\mathcal{U}_{n_1}$ and  the sets $\Lambda_k^{(n_1)}$. Let
$\{k_l\}_{l\in \mathbb{N}}$ be a dense set of points in $(k_0,
+\infty)$. Apply Lemma \ref{l.reduction} to each $k^*=k_l$, $l\in
\mathbb{N}$, for $\delta=1$,
$\varepsilon=\varepsilon_l<\frac{1}{l}$. That gives a sequence of
open intervals $\{V_l\}_{l\in \mathbb{N}}$. Since the sequence
$\{k_l\}_{l\in \mathbb{N}}$ is dense in $(k_0, +\infty)$ and
$\varepsilon_l\to 0$, intervals $\{V_l\}$ are dense in $(k_0,
+\infty)$.

Take $\mathcal{U}_1=V_1$. If $\mathcal{U}_1, \ldots, \mathcal{U}_t$
are constructed, take $V_s$ -- the first interval in the sequence
$\{V_l\}_{l\in \mathbb{N}}$ that is not contained in
$\overline{\cup_{n_1=1}^{t}\mathcal{U}_{n_1}}$. Then $V_s\backslash
\overline{\cup_{n_1=1}^{t}\mathcal{U}_{n_1}}$ is a finite union of
$K$ open intervals. Take those intervals as $\mathcal{U}_{t+1},
\ldots, \mathcal{U}_{t+K}$, and continue in the same way. This gives
a sequence of a disjoint intervals $\{\mathcal{U}_{n_1}\}_{n_1\in
\mathbb{N}}$ with desired properties.

Now, assume that intervals  $\{\mathcal{U}_{\cN(N)}\}$ are
constructed. Take one of the intervals $\mathcal{U}_{\cN(N)}$. Due to Theorem C, the
set $\Lambda_k^{\cN(N)}$ exhibits persistent tangencies. Therefore, application of Theorem \ref{t.maintheorem} gives  a dense
sequence of intervals $\{V_{\cN(N), l}\}_{l\in \mathbb{N}}$ in
$\mathcal{U}_{\cN(N)}$ such that for each $k\in V_{\cN(N), l}$ the
map $f_k$ has a basic set $\Delta_k$ such that Hausdorff dimension
$\text{\rm dim}_H\Delta_k>2-{\tiny \dfrac{1}{N+1}}$ and
$\Delta_k\cap \Lambda_k^{\cN(N)}\ne \emptyset$.


Now we need the following lemma from hyperbolic
dynamics.
 \blm\label{l.basicsets}
Let $\Delta_1$ and $\Delta_2$ be two basic sets (i.e. locally maximal transitive hyperbolic sets) 
of a diffeomorphism
$f:M^2\to M^2$ of a surface $M^2$ that are homeomorphic to a Cantor set. Suppose that $\Delta_1\cap \Delta_2\ne \emptyset$.   Then there is a
basic set $\Delta_3\subseteq M^2$ such that $\Delta_1\cup
\Delta_2\subseteq \Delta_3$.
 \elm

 \brm
 Having in mind some possible generalizations, we notice that Lemma \ref{l.basicsets} holds also for higher dimensional diffeomorphisms (two-dimensionality of the phase space is not used in the proof).
 \erm

\begin{proof}[Proof of Lemma \ref{l.basicsets}] Due to recent result of Anosov \cite{An} any zero-dimensional hyperbolic set is contained in a locally maximal hyperbolic set. Therefore in our case $\Delta_1\cup \Delta_2$ is contained in some locally maximal hyperbolic set $\widetilde{\Delta}$. Spectral decomposition theorem claims that $\widetilde{\Delta}$ is a finite disjoint union of basic sets. One of these basic sets must contain $\Delta_1$, and since $\Delta_1\cap \Delta_2\ne \emptyset$, the same basic set has to contain $\Delta_2$.
\end{proof}

Apply Lemma \ref{l.basicsets} to $\Delta_k$ and
$\Lambda_k^{\cN(N)}$, and denote by
$\widetilde{\Lambda}_k^{\cN(N)}\supset \Delta_k\cup
\Lambda_k^{\cN(N)}$ the corresponding basic set. The set
$\widetilde{\Lambda}_k^{\cN(N)}$ also has persistent tangencies. The
unfolding of a homoclinic tangency creates elliptic periodic orbits
which shadow the orbit of homoclinic tangencies. The creation of
these generic elliptic points can be seen from the renormalization
at conservative homoclinic tangencies, see \cite{MR1}. Shrinking
$V_{(\cN(N), l)}$ if necessary we can guarantee that
$\widetilde{\Lambda}_k^{\cN(N)}$ can be $\frac{1}{N+1}$-accumulated
by elliptic periodic points. Now the same procedure that we applied
above to intervals $\{V_l\}$ gives a collection of disjoint
intervals $\{\mathcal{U}_{(\cN(N), n_{N+1})}\}_{n_{N+1}\in
\mathbb{N}}$ in $\mathcal{U}_{\cN(N)}$. For any $k\in
\mathcal{U}_{\cN(N), n_{N+1}}\subset V_{(\cN(N), l)}$ we take
$\Lambda_k^{(\cN(N), n_{N+1})}=\widetilde{\Lambda}_k^{\cN(N)}$. By construction,
all the properties in Proposition \ref{p.conditions} are now satisfied.
\end{proof}

Now let us explain how Theorems \ref{t.1}  follows from Proposition
\ref{p.conditions}. Set $\mathbf{U}_N=\cup_{\cN(N)}
\mathcal{U}_{\cN(N)}$. Due to {\bf U3)} the set $\mathbf{U}_N$ is
dense in $[k_0, +\infty)$. Therefore $\mathcal{R}=\cap_{N\in
\mathbb{N}}\mathbf{U}_N$ is a residual subset of $[k_0, +\infty)$.
Properties {\bf U1)} and {\bf U2)} imply that for each $k\in
\mathcal{R}$ the value $k$ belongs to each element of the uniquely
defined nested sequence of intervals
$$
\mathcal{U}_{n_1}\supseteq \mathcal{U}_{n_1, n_2}\supseteq \ldots
\supseteq \mathcal{U}_{\cN(N)} \supseteq \ldots
$$
Therefore for $k\in \mathcal{R}$ the sequence of basic sets
$$
\Lambda_k\subseteq \Lambda_k^{(n_1)}\subseteq \Lambda_k^{(n_1,
n_2)}\subseteq \ldots \subseteq \Lambda_k^{\cN(N)}\subseteq \ldots
$$
is defined such that Hausdorff dimension $\text{\rm dim}_H
\Lambda_k^{\cN(N)}>2-1/N$. Since $k$ is fixed now, redenote
$\Lambda_k^N=\Lambda_k^{\cN(N)}$. Items 1.-- 3. of Theorem
\ref{t.1} follows from {\bf U5)} and {\bf U6)}.

The closure of the union of a nested sequence of transitive sets is
transitive, so property 4. follows.

 For a locally maximal transitive invariant hyperbolic set of a surface
diffeomorphism the Hausdorff dimension of the set is equal to the
Hausdorff dimension of any open subset of this set, see \cite{MM}.
This implies the property 5.  for the sets $\Omega_k,
k\in \mathcal{R}$.

Property 6. follows directly from {\bf U7)}.

Finally, in order to prove Theorem \ref{t.2} it is enough to consider the
family of basic sets $\Lambda_k^{\cN(N)}$ defined for $k\in
\mathbf{U}_N$ for large enough $N$.
\end{proof}

\end{document}